\def\ps@pprintTitle{%
  \let\@oddhead\@empty
  \let\@evenhead\@empty
  \def\@oddfoot{\reset@font\hfil\thepage\hfil}
  \let\@evenfoot\@oddfoot
}
\newcommand{\N }{\ensuremath{\mathbb N} } 
\tikzset{weird fill/.style={append after command={
   \pgfextra
        \draw[sharp corners, fill=#1,draw=white]%
    (\tikzlastnode.west)%
    [rounded corners=8pt] |- (\tikzlastnode.north)%
    [rounded corners=1pt] -| (\tikzlastnode.east)%
    [rounded corners=5pt] |- (\tikzlastnode.south)%
    [rounded corners=0pt] -| (\tikzlastnode.west);
   \endpgfextra}}}
\newtheorem{theorem}{Theorem}
\newtheorem{definition}{Definition}
\newtheorem{lemma}[theorem]{Lemma}
\newtheorem{corollary}[theorem]{Corollary}
\newtheorem{proposition}[theorem]{Proposition}
\newdefinition{remark}{Remark}
\newproof{proof}{Proof}
\begin{document}

\begin{frontmatter}



\title{Asymptotically Optimal Threshold Bias for the
$(a : b)$ Maker-Breaker Minimum Degree, Connectivity and Hamiltonicity  Games} 


\author[label1]{Adnane Fouadi}
\author[label1,label2]{Mourad El Ouali}
\author[label2]{Anand Srivastav}
\affiliation[label1]{organization={Laboratory of Engineering Science, Faculty of Science, Ibn Zohr University},city={Agadir}, country={Morocco}}
            
\affiliation[label2]{organization={Department of Mathematics,  Christian-Albrechts-Universität zu Kiel},
 city={Kiel},country={Germany}}

\begin{abstract}
We study the $(a:b)$ Maker-Breaker subgraph game played on the edges of the complete graph $K_n$ on $n$ vertices, $n,a,b \in \N$ where the goal of Maker is to  build a copy of a specific fixed subgraph $H$, while Breaker aims to prevent Maker from building $H$.  Breaker starts. Maker wins, if he succeeds to build a copy of the subgraph under consideration, otherwise Breaker wins. In our work $H$ is a spanning graph with minimum degree $k=k(n)$, a connected spanning subgraph or a Hamiltonian subgraph. In the $(a:b)$ game in each round Maker chooses $a$ unclaimed edges of $K_n$ and Breaker chooses $b$ unclaimed edges. The number $b_H\in \N$ satisfying the condition that for any $b<b_H$ Maker has a winning strategy, while for any $b \geq b_H$ Breaker has a winning strategy is called the threshold bias of the $H$-subgraph game. The determination of $b_H$, if it exists, is the fundamental problem in such games. For the $k$-minimum-degree (1:b) game the threshold bias is between $(1-o(1))\frac{n}{\ln n}$ due to Chv{\'a}tal and Erd{\"o}s  (1978) and $(1+o(1))\frac{n}{\ln n}$ if $k<\frac{\ln \ln n}{3}$ due to Gebauer and Szab{\'o}  (2009). For the general $(a:b)$ game we show that for $a=o\left(\sqrt{\frac{n}{\ln(n)}}\right)$ the threshold bias is between $(1-o(1)) \frac{an}{a+\ln(n)}$ and $(1+o(1)) \frac{an}{a+\ln(n)}$, if $k=o(\ln(n))$ and it is between $(1-o(1))n$ and $n-k$, if $a=\Omega\left(\sqrt{\frac{n}{\ln(n)}}\right)$ and $k=o(a)$.
Hefetz,  Mikalacki and Stojakovic $(2012)$ showed that the  threshold bias for the (a:b) connectivity game is between $(1-o(1)) \frac{c n}{c+1}$
and $\min \left\{c n,(1+o(1)) \frac{2 n}{3}\right\}$ provided that 
$a=c\ln(n)$ and $c$ is a constant in $]0,1]$. For the case $c>1$, they showed that the threshold bias is between $(1-o(1)) \frac{c n}{c+1}$ and $(1+o(1)) \frac{2 c n}{2 c+1}$and posed as open problem to find a matching asymptotic term in the upper and lower bound. We resolve this problem, and show that for $(1+o(1)) \frac{c n}{c+1}\leq b $ Breaker wins for any $a=c\ln(n)$, $c>0$. Finally, we consider the Hamiltonicity $(a:b)$ game. For the $(1:b)$ game Krivelevich (2011) showed that   $(1+o(1))\frac{n}{\ln n}$ is the exact threshold bias. We are able to treat the general $(a:b)$ case: if  $a=o\left(\sqrt{\frac{n}{\ln(n)}}\right)$, then the asymptotic optimal bias is $\frac{a n}{a+\ln n}$, while for  $a=\Omega\left(\sqrt{\frac{n}{\ln(n)}}\right)$, the asymptotic optimal bias is $n$. Some new techniques are required to achieve our results. 
\end{abstract}



\begin{keyword}
Positional Games \sep  Maker-Breaker subgraph game \sep biased games \sep minimum-degree-$k$ game \sep Connectivity \sep Hamiltonicity \sep asymptotic optimal  bias


\end{keyword}

\end{frontmatter}




\section{Introduction}

Positional game theory studies two-player games of perfect information, usually played on discrete objects, ranging from popular leisure games like Tic-Tac-Toe and Hex to entirely abstract games played on graphs and hypergraphs. Over the last 40 years such games \cite{beck2008combinatorial,glazik2022new,hefetz2014positional, hancock2019maker, krivelevich2011critical, liebenau2022threshold, mikalavcki2017winning} has been  extensively studied.\\

 Let  $\mathcal{F}$  be a family of subsets of a finite set $X$. In the biased  $(a:b)$ Maker-Breaker game played on the hypergraph $(X,\mathcal{F})$ two players, called Maker and Breaker, alternately claim  $a$ and $b$
previously unclaimed elements  of $X$.
The set $X$ is called the board; the elements of $\mathcal{F} \subset 2^X$ are the winning sets. Maker wins the game, if he  is able to occupy a winning set at the end of the game, otherwise Breaker wins. \\

The  threshold bias for the $(a:b)$ game is defined as follows: given a  Maker-Breaker game $(X,\mathcal{F})$ and $a \geq 1$, let $b_0(a)$ be the unique
positive integer such that for $b < b_0(a)$  Maker has a winning strategy   but if $b \geq b_0(a)$ then there is a winning strategy for Breaker.  Note that for the Maker-Breaker subgraph game it is not known, and   in fact is still a challenging open question since the pioneering work of Chv{\'a}tal and Erd{\"o}s \cite{chvatal1978biased} in 1978 , whether or not the threshold  bias exists for subgraphs containing a cycle.  A breakthrough towards  the optimal threshold bias for the Maker-Breaker triangle game is certainly the recent paper of Glazik and Srivastav  \cite{glazik2022new}.\\

In this paper we  study  the following  variants of the Maker-Breaker game: the minimum-degree-$k$  game, the  connectivity game, and the  Hamiltonicity game. All three games are   played on the complete graph $K_n$ on $n$ vertices, where the board of these games is the set of the  edges of $K_n$. The winning sets  for the Maker are  all spanning subgraphs with minimum degree $k$,  and all connected  spanning subgraph, and   all Hamilton cycles, respectively.

\subsection{Previous work} 
The biased (1:b) connectivity game was introduced   by Chv{\'a}tal and Erd{\"o}s \cite{chvatal1978biased}.  Maker wins the connectivity game, if he is able to occupy the edges of a spanning tree. The authors proved that if
$b\geq (1+o(1))\,\, \frac{n}{\ln(n)}$, then Breaker wins the game  and if $b\leq \left( \frac{1}{4}-o(1)\right) \frac{n}{\ln(n)}$, Maker is able  to occupy a spanning tree. Since then the problem of finding the exact leading constant for the
threshold bias of the connectivity game has been one of the  famous open problems in Maker-Breaker games. The first improvement came with an argument of Beck \cite{beck1982remarks}:  Maker wins the connectivity game for $b\leq \left(\ln 2 - o(1)\right) \frac{n}{\ln(n)}$. Later Gebauer and Szab\'o \cite{gebauer2009asymptotic} established the asymptotic optimal bias for this game showing that Maker has a winning strategy, if $b\leq (1-o(1)) \frac{n}{\ln n}$ . The same authors showed that
 Maker can  achieve  not only a spanning tree, but even an induced subgraph with  minimum degree $k=k(n)<\frac{\ln \ln n}{3}$ by analyzing the $(1 : b)$ minimum-degree-$k$ game, if $b \leq (\ln n-\ln \ln n-(2 k+3)) \frac{n}{\ln ^{2} n}$. The techniques used in their proofs were applied in several subsequent papers to Hamiltonicity (resp. connectivity and strong connectivity) of graphs, digraphs, and randomly perturbed graphs (see e.g. \cite{clemens2021maker,frieze2021maker,HefetzMS12,krivelevich2011critical}).
 Hefetz, Mikalacki and Stojakovi{\'c} \cite{HefetzMS12} considered   the general $(a:b)$ connectivity game.  For $a=c\ln(n)$ where $c$ is a constant, $0<c\leq 1$, they proved that the  threshold bias for the $(a:b)$ connectivity game is between $(1-o(1)) \frac{c n}{c+1}$
and $\min \left\{c n,(1+o(1)) \frac{2 n}{3}\right\}$, and  for $c>1$, they showed that the threshold bias is between $(1-o(1)) \frac{c n}{c+1}$ and $(1+o(1)) \frac{2 c n}{2 c+1}$. The lower bound and upper bound are not matching in the asymptotic term. 
We resolve  this problem and show that for $(1+o(1))\frac{cn}{c+1} \leq b$ Breaker has a winning strategy for any $a=c \ln n$ and $c>0$.\\
 
The Maker-Breaker Hamiltonicity game   has a rich  history. It began with the paper of Chv{\'a}tal and Erd{\"o}s \cite{chvatal1978biased}, who studied unbiased Hamiltonicity  and showed that Maker wins for every sufficiently large $n$. For the (1:b) Hamiltonicity game, Bollob{\'a}s and Papaioannou \cite{bollobas1982biased}  proved that Maker is able to win  if $b\leq \frac{c \ln(n)}{\ln(\ln(n))}$, for some constant $c > 0$. This result was improved by Beck \cite{beck1985random}  who showed that  Maker wins the game if  $b \leq \left(\frac{\ln 2}{27}-o(1)\right) \frac{n}{\ln n}$. Krivelevich and Szab{\'o} \cite{krivelevich2008biased} gave a winning strategy for Maker if $b \leq (\ln 2-o(1)) \frac{n}{\ln n}$. On the other hand,  Breaker wins, if he is able to isolate some vertex, which was already treated in the paper of  Chv{\'a}tal and Erd{\"o}s \cite{chvatal1978biased}, thus  Breaker has a winning strategy for $(1+o(1))\frac{n}{\ln n} \leq b$.   Determining the threshold bias in this game has remained a challenging question for numerous years. In 2011, Krivelevich \cite{krivelevich2011critical} resolved this problem and showed that $(1+o(1))\frac{n}{\ln n}$ is the exact threshold bias.\\

\subsection{Our results}

\subsubsection{The $(a:b)$ Maker-Breaker minimum-degree-k game}

We show that  for
  $a=o\left( \sqrt{\frac{n}{\ln(n)}}\right)$ and $k=o(\ln(n))$ Maker wins, if $b\leq\left(1-o(1) \right)\frac{an}{a+\ln(n)}$  involving the parameter $\textit{a}$ into the threshold bias, and for $a=\Omega\left( \sqrt{\frac{n}{\ln(n)}}\right)$ and $k=o(a)$ he wins for $b\leq \left(1-o(1)\right) n$. Both generalize the result of Gebauer and Szab{\'o} \cite{gebauer2009asymptotic} to a wide range of \textit{a}.\\
  
Our Maker strategy, briefly called  min-deg-strategy, aims to build stars around each vertex.  For the analysis some new arguments are needed. Gebauer and Szab{\'o}  \cite{gebauer2009asymptotic} define the average danger of a multiset of vertices for Maker resp. 

In their approach a vertex is dangerous only once a time during the game. In our approach  we reflect that a vertex can become dangerous several times during the course of the game, thus  we calculate the average in accordance with the number of different elements and their eventual repeats. In this way the dynamics of the game w.r.t the chosen strategy of Maker is captured more specifically. Further  Gebauer and Szab{\'o}'s \cite{gebauer2009asymptotic} technique of calculating the average value of a multiset  leads to complications in controlling the change of values between  the average of two multisets. This article  adopts a different approach  that allows us to obtain the generalized form of Theorem 1.2 in \cite{gebauer2009asymptotic}  without the need to discuss the case where two multisets are equal or not.\\

Additionally, we bound  Breaker's vertex degrees by $(1-\delta)n$ for some constant $\delta>0$ as long as their Maker's degree is less than $k$. This result is crucial for the analysis of the $(a:b)$ Hamiltonicity game in section \ref{H}.\\

At this point the question is whether there is a Breaker strategy so that he wins if $b \geq \left(1+o(1) \right) \frac{an}{a+\ln(n)} $ in case of $a=o\left( \sqrt{\frac{n}{\ln n}} \right)$ and $k=o(\ln n)$ resp. if $b\geq\left(1+o(1) \right)n $ in case of $a=\Omega\left(\sqrt{\frac{n}{\ln n} }\right)$ and $k=o(a)$, which would establishing the asymptotically optimal threshold bias. We are able to prove these statements. Furthermore, for the $(a:b)$ connectivity game, our result for $k=1$, in the case $a \leq \ln(n)$ imply the Breaker's win, if $b \geq \left(1+o(1) \right) \frac{an}{a+\ln(n)} $, while in \cite{HefetzMS12} the weaker bound of 
 $b \geq \left(1+o(1) \right) \frac{an}{\ln(an)} $ is stated. 

Our proof relies on the use of the BoxMaker game technique of Chv{\'a}tal and Erd{\"o}s  \cite{chvatal1978biased}, where Breaker plays as BoxMaker. Gebauer and Szabo \cite{gebauer2009asymptotic} used this technique to give a wining strategy for Breaker the minimum degree (1:b) Maker-Breaker game, and Hefetz et al \cite{HefetzMS12} adapted the same technique for Breaker's win for the $(a:b)$ Connectivity Maker-Breaker Game.
The instance of the BoxMaker game is constructed first  as  a large clique in $\mathbb{K}_{n}$. The size of this clique is crucial for the determination of the threshold bias, because the larger the clique is, the more difficult it becomes for Maker to prevent the isolation of vetices by Breaker.  In previious approach Hefetz et al \cite{HefetzMS12} for $a=c\ln{n},\, c>0$  the clique size is $h=\left \lceil \frac{a n}{(a+1) \ln \left(a n\right)}\right \rceil$. We are able to give an analysis for the larger clique size, namely $h=\left \lceil \frac{ n}{2 \left(a+\ln (n)\right)}\right \rceil$. 

\subsubsection{The $(a:b)$ Maker-Breaker connectivity  game}

 Hefetz et al. \cite{HefetzMS12}  showed that if $a=c \ln n$ for some $0<c \leq 1$, the threshold bias $b_0(a)$ satisfies $(1-o(1)) \frac{c n}{c+1}<b_{0}(a)<\min \left\{c n,(1+o(1)) \frac{2 n}{3}\right\}$, and if $a=c \ln n$ for some $c>1$, then $(1-o(1)) \frac{c n}{c+1}<b_{0}(a)<(1+o(1)) \frac{2 c n}{2 c+1}$. They posed as an open problem the reduction of the gap between the lower bound and the upper bound on $b_0(a)$. 
Our result for the $(a:b)$-minimum degree game for $a=c\ln{n}, \, c>0 $ and $b\geq \left(1+o(1) \right) \frac{an}{\ln(an)} $ immediatly implies Breaker's win of the Connectivity game.

 \subsubsection{The $(a:b)$ Maker-Breaker Hamiltonicity game}

We prove the following results
\begin{itemize}
\item Let $a=o\left( \sqrt{\frac{n}{\ln(n)}}\right)$. Then,  Breaker wins the $(a:b)$ Hamiltonictian game if $b \geq \left(1+o(1)  \right) \frac{an}{a+\ln(n)}$ and Maker wins if  $b \leq \left(1-o(1)  \right) \frac{an}{a+\ln(n)}$.
\item Let $a=\Omega\left( \sqrt{\frac{n}{\ln(n)}}\right)$. Then,  Breaker wins the $(a:b)$ Hamiltonictian game if $b \geq \left(1+o(1)  \right) n$ and Maker wins if  $b \leq \left(1-o(1)  \right) n$.
\end{itemize}

In this kind of results  Breaker follows the simple strategy, namely isolating a particular vertex, that means no Maker-edge is incident in this vertex. Then obviously he wins the Hamiltonicity game. By using our result on the  minimum-degree-$k$  game ensuring boundness of Breaker's and Maker's degrees and the method of Krivelevich \cite{krivelevich2011critical} we prove that Breaker wins the game as stated above. In fact, we show that  Breaker succeed to construct an  expander by using a randomized strategy derived from the strategy adopted in Theorem \ref{main2}. We generalize the expander graph approach of Krivelevich \cite{krivelevich2011critical}  to the $(a:b)$ Hamiltonicity game, where we can show that due to $a>1$ the number of rounds for constructing an appropriate expander graph is at most $\frac{16 n}{a}$.

\textbf{Notation.} When not necessary we omit, ceiling and floor signs. For a positive integer $s$, $H_s=\sum_{j=1}^{s} \frac{1}{j}$ is the $s$th harmonic number.

\section{Minimum-Degree-k Game}
\subsection{Winning Strategy for Maker \label{MW}}
We define the danger of a vertex for Maker and Breaker, and a strategy which minimizes the danger for Maker to loose the game.
\begin{definition}

Let $G_M$,$G_B$ denote the subgraphs of $G$ with  edges taken by Maker, Breaker respectively.  $d_M(v)$ and $d_B(v)$ resp.
denote the degree of vertex $v$ in $G_M$ resp. $G_B$ for $v \in V$.   A vertex $v$ is called \textit{dangerous} if $d_M(v)\leq k-1$. Let $\mathcal{D}(v) = d_B(v) - \frac{2b}{a} d_M(v)$  be the danger of vertex $v$. A vertex $v \in V$ is called saturated , if $d_M(v)+d_B(v)=n-1$.

\end{definition}

\textbf{Min-Deg-Strategy:}  For $i\geq 1$, Maker's $i$th move consists of \textit{a} steps. For every $1 \leq j \leq a$, in the $j$th step of his $i$th move Maker chooses a   vertex $v_i^{(j)}$ whose danger is maximal and chooses an arbitrary edge incident
with $v_i^{(j)}$, not already taken. \\

\begin{remark}
In this strategy Maker is blocking vertices most dangerous for him, a natural approach.
\end{remark}

For the analysis of the Maker's Min-Deg-Strategy we introduce the concept of the average danger.

Let's consider the $s$th round of the game, $s\in \N$, $s\geq 2$, and let $v_s$ be a particular dangerous vertex.  For every $1 \leq i \leq s$, let $M_{i}$ and $B_{i}$ denote the $i$th move of Maker and of Breaker, respectively. By Maker's strategy the vertices $v_{1}^{(1)}, \ldots, v_{1}^{(a)}, v_{2}^{(1)}, \ldots, v_{2}^{(a)}, \ldots, v_{s-1}^{(1)}, \ldots, v_{s-1}^{(a)}$ were of maximum danger at the appropriate time, that is, just before Maker's $j$th step of his $i$th move. For every $1 \leq i \leq s-1$, let $A_{s-i}=\left\{v_{s-i}^{(1)}, \ldots, v_{s-i}^{(a)}, \ldots, v_{s-1}^{(1)}, \ldots, v_{s-1}^{(a)}, v_{s}\right\}$ denote the subset of vertices of $\left\{v_{1}^{(1)}, \ldots, v_{1}^{(a)}, v_{2}^{(1)}, \ldots, v_{2}^{(a)}, \ldots, v_{s-1}^{(1)}, \ldots, v_{s-1}^{(a)}, v_{s}\right\}$ that are dangerous just before Maker's $(s-i)$th move and let $A_{s}=\left\{v_{s}\right\}$. Note that the same vertex can occur several
times in the multiset $\left\{v_{1}^{(1)}, \ldots, v_{1}^{(a)}, v_{2}^{(1)}, \ldots, v_{2}^{(a)}, \ldots, v_{s-1}^{(1)}, \ldots, v_{s-1}^{(a)}, v_{s}\right\}$.

  \begin{definition}{(Average Danger)}
 We define the average danger value of sets as follows
\begin{enumerate}
\item[(i)] $\overline{\mathcal{D}}_{M}(A_{s-i})=\displaystyle \frac{\sum_{w \in A_{s-i}} \mathcal{D}(w) }{ai+1}$ is the average danger value of the vertices in the multiset $A_{s-i}$ for Maker,  before he starts his moves in round $s-i$.
 \item[(ii)] $\overline{\mathcal{D}}_{B}(A_{s-i})=\displaystyle \frac{\sum_{w' \in A_{s-i}} \mathcal{D}(w') }{ai+1}$ is the average danger value of the vertices in the multiset $A_{s-i}$ for Breaker, immediately before round $s-i$.
 \end{enumerate}
 \end{definition}
 
 Note that the round $s-i$ starts with Breaker  moves. Thus, $\overline{\mathcal{D}}_{M}(A_{s-i})$ and $\overline{\mathcal{D}}_{B}(A_{s-i})$, in general have different values.\\
 
  In the next two lemmas we state lower and upper bounds for the average danger. Maker plays according to the Min-Deg-Strategy. We illustrate the definition for rounds $s-i$ and $s-i+1$:

  \begin{center}
  \definecolor{ududff}{rgb}{0.8, 0.8, 1.0}
\definecolor{ferrarired}{rgb}{1.0,0.11,0.0}
\begin{tikzpicture}[line cap=round,line join=round,>=latex,x=.65cm,y=.65cm]
\clip(-14,1) rectangle (8,5.2);

\node[weird fill=ududff](a) at (-12.5,3) {\small $\overline{\mathcal{D}}_B(A_{s-i})$};
\node[weird fill=ududff](b) at (-6.5,3) {\small $\overline{\mathcal{D}}_M(A_{s-i})$};
\node (a1) at (-9.5,2.1) {\scriptsize \textbf{Breaker moves}};
\node (a11) at (-9.5,1.5) {\scriptsize $B_{s-i}$};
\node[weird fill=ududff](c) at (-.5,3) {\small $\overline{\mathcal{D}}_B(A_{s-i+1})$};
\node (c1) at (-3.5,2.1) {\scriptsize \textbf{Maker moves}};
\node (c11) at (-3.5,1.5) {\scriptsize $M_{s-i}$};
\node[weird fill=ududff](d) at (5.5,3) {\small $\overline{\mathcal{D}}_M(A_{s-i+1})$};
\node (d1) at (2.5,2.1) {\scriptsize \textbf{Breaker moves}};
\node (d11) at (2.5,1.5) {\scriptsize $B_{s-i+1}$};

\draw[line width=.7pt,->] (a)--(b);
\draw[line width=.7pt,->] (b)--(c);
\draw[line width=.7pt,->] (c)--(d);
\draw[line width=.7pt,dashed] (-2.3,3)--(-2.3,5);
\node[color=ududff!50!black](c) at (-8,4.5) {\small \textbf{round $s-i$} };
\node[color=ududff!50!black](c) at (2.5,4.5) {\small \textbf{round $s-i+1$} };

\end{tikzpicture}

\end{center}

\begin{proposition}{}{ \label{0}}
For every $1 \leq i \leq s-1$, we have $\overline{\mathcal{D}}_{M}\left(A_{s-i}\right) \geq \overline{\mathcal{D}}_{B}\left(A_{s-i}\right) $ and
 $\overline{\mathcal{D}}_{M}\left(A_{s-i}\right) \geq \overline{\mathcal{D}}_{B}\left(A_{s-i+1}\right)$. 

\end{proposition}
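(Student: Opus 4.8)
The proposition compares average dangers across consecutive half-rounds, so the natural approach is to track how $\sum_{w\in A} \mathcal{D}(w)$ and the cardinality $|A|$ change as we move from Breaker's move $B_{s-i}$ to Maker's move $M_{s-i}$, and then through $B_{s-i+1}$. The key observation is that the multisets $A_{s-i}$ and $A_{s-i+1}$ differ in a controlled way: $A_{s-i} = A_{s-i+1} \cup \{v_{s-i}^{(1)},\dots,v_{s-i}^{(a)}\}$ (as multisets), possibly after removing vertices that have ceased to be dangerous, and $|A_{s-i}| = a i + 1$ while $|A_{s-i+1}| = a(i-1)+1$. So the passage from $A_{s-i+1}$ to $A_{s-i}$ adds exactly $a$ elements to both numerator-sum and to the denominator (the denominator grows by $a$).

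First I would prove $\overline{\mathcal{D}}_M(A_{s-i}) \ge \overline{\mathcal{D}}_B(A_{s-i})$. Both are $\frac{1}{ai+1}\sum_{w\in A_{s-i}}\mathcal{D}(w)$, but evaluated at different times: the $B$-version just before Breaker's move $B_{s-i}$, the $M$-version just before Maker's move $M_{s-i}$, i.e.\ after $B_{s-i}$. During $B_{s-i}$ Breaker claims $b$ edges, each raising $d_B$ of its two endpoints by $1$, hence raising $\mathcal{D}$ of each endpoint by $1$; Maker's degrees are untouched, so $\mathcal{D}$ is nondecreasing for every vertex. Therefore $\sum_{w\in A_{s-i}}\mathcal{D}(w)$ can only increase across $B_{s-i}$, giving $\overline{\mathcal{D}}_M(A_{s-i}) \ge \overline{\mathcal{D}}_B(A_{s-i})$. (One must be a little careful that $A_{s-i}$ is defined by a fixed membership rule; the same multiset is used on both sides, so the comparison is legitimate.)

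Next, $\overline{\mathcal{D}}_M(A_{s-i}) \ge \overline{\mathcal{D}}_B(A_{s-i+1})$. Write $S = \sum_{w\in A_{s-i}}\mathcal{D}(w)$ evaluated just before $M_{s-i}$, so $\overline{\mathcal{D}}_M(A_{s-i}) = S/(ai+1)$. Split off the $a$ vertices $v_{s-i}^{(1)},\dots,v_{s-i}^{(a)}$ chosen by Maker in move $M_{s-i}$: $S = \Sigma' + \sum_{j=1}^a \mathcal{D}(v_{s-i}^{(j)})$ where $\Sigma' = \sum_{w\in A_{s-i+1}}\mathcal{D}(w)$ at that same moment. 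Because each $v_{s-i}^{(j)}$ had \emph{maximal} danger among all vertices at the moment it was picked — in particular at least as large as the danger of every vertex in $A_{s-i+1}$ at the corresponding time, and dangers only move by bounded amounts between the $a$ steps — we get $\sum_{j=1}^a \mathcal{D}(v_{s-i}^{(j)}) \ge \frac{a}{a(i-1)+1}\,\Sigma' = a\,\overline{\mathcal{D}}_M(A_{s-i+1})$ (roughly: the average of $a$ maximal values dominates the average of $A_{s-i+1}$). Combined with the standard mediant inequality — if $\frac{x}{p}\ge \frac{y}{q}$ then $\frac{x+y}{p+q}\ge \frac{y}{q}$ — applied with $x = \sum_j \mathcal{D}(v_{s-i}^{(j)})$, $p=a$, $y=\Sigma'$, $q = a(i-1)+1$, this yields $\overline{\mathcal{D}}_M(A_{s-i}) = \frac{x+y}{p+q} \ge \frac{y}{q} = \overline{\mathcal{D}}_M(A_{s-i+1})$. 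Finally, passing from just-before-$M_{s-i}$ to just-before-$B_{s-i+1}$ means executing Maker's move $M_{s-i}$, which raises $d_M$ (and only of the $v_{s-i}^{(j)}$'s), hence \emph{decreases} $\mathcal{D}$; so $\overline{\mathcal{D}}_M(A_{s-i+1}) \ge \overline{\mathcal{D}}_B(A_{s-i+1})$ after a further check that the vertices removed from the sum when passing to a smaller dangerous set only had nonnegative contributions, or that one instead compares the fixed multiset across the move. Chaining the two inequalities gives $\overline{\mathcal{D}}_M(A_{s-i}) \ge \overline{\mathcal{D}}_B(A_{s-i+1})$.

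\textbf{Main obstacle.} The delicate point is bookkeeping the multiset structure consistently: $A_{s-i}$ and $A_{s-i+1}$ are defined as the vertices \emph{still dangerous} at two different times, so a vertex present in the list $v_{s-i}^{(j)},\ldots,v_{s-1}^{(j)},v_s$ may be in one set but not the other, and the "maximality" of $v_{s-i}^{(j)}$ holds with respect to \emph{all} vertices at one specific instant, not uniformly across the $a$ sub-steps of the move. I expect the real work is in showing that the $a$ dangers $\mathcal{D}(v_{s-i}^{(1)}),\dots,\mathcal{D}(v_{s-i}^{(a)})$, measured at their respective pick-times, still dominate the average over $A_{s-i+1}$ measured just before $M_{s-i}$ — this is exactly where the excerpt's remark about "calculating the average in accordance with the number of different elements and their eventual repeats" pays off, letting one avoid the case distinction (equal vs.\ unequal multisets) that burdened Gebauer--Szab\'o. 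Everything else reduces to the monotonicity of $\mathcal{D}$ under each player's move and the elementary mediant inequality.
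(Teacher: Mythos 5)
Your argument for the first inequality is the paper's: $B_{s-i}$ only increases $d_B$, hence only increases $\mathcal{D}$, so the average over the fixed multiset $A_{s-i}$ cannot go down. Your decomposition $A_{s-i}=A_{s-i+1}\cup\{v_{s-i}^{(1)},\dots,v_{s-i}^{(a)}\}$ and the idea of invoking the maximality of the picked vertices plus a mediant-type estimate is also the right skeleton for the second inequality, and is in fact more explicit than the one-line hint the paper offers. However, your intermediate step contains a genuine gap. You aim to show
\[
\frac{1}{a}\sum_{j=1}^{a}\mathcal{D}\bigl(v_{s-i}^{(j)}\bigr)\;\ge\;\frac{\Sigma'}{a(i-1)+1},
\]
with \emph{all} dangers evaluated just before $M_{s-i}$, i.e.\ you compare the picked vertices against the \emph{before-move} average over $A_{s-i+1}$. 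This need not hold. For $j\ge 2$, the maximality of $v_{s-i}^{(j)}$ is with respect to dangers at its \emph{pick time}, after $j-1$ Maker edges have already been placed; those edges may have reduced $\mathcal{D}(w)$ for some $w\in A_{s-i+1}$ by up to $(j-1)\tfrac{2b}{a}$, so $v_{s-i}^{(j)}$ only dominates the already-reduced values, not $\mathcal{D}^{\mathrm{before}}(w)$. Concretely, with $a=2$ and $i=1$, one can have $\mathcal{D}^{\mathrm{before}}(v_{s-i}^{(2)})\approx\Sigma'-\tfrac{2b}{a}$, making your hypothesis $\tfrac{x}{p}\ge\tfrac{y}{q}$ for the mediant inequality false. (Separately, your identification $\tfrac{\Sigma'}{a(i-1)+1}=\overline{\mathcal{D}}_M(A_{s-i+1})$ is a mislabel: by definition $\overline{\mathcal{D}}_M(A_{s-i+1})$ is taken just before $M_{s-i+1}$, a later instant, not just before $M_{s-i}$.)

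The fix is to compare everything directly to the \emph{target} $\overline{\mathcal{D}}_B(A_{s-i+1})$, which lives at the \emph{latest} instant (just after $M_{s-i}$), rather than routing through the before-move average over $A_{s-i+1}$. Since dangers are nonincreasing throughout $M_{s-i}$, one has for every $j$ and every $w\in A_{s-i+1}$
\[
\mathcal{D}^{\mathrm{before}}\bigl(v_{s-i}^{(j)}\bigr)\;\ge\;\mathcal{D}^{\mathrm{pick}(j)}\bigl(v_{s-i}^{(j)}\bigr)\;\ge\;\mathcal{D}^{\mathrm{pick}(j)}(w)\;\ge\;\mathcal{D}^{\mathrm{after}}(w),
\]
and averaging over $w\in A_{s-i+1}$ gives $\mathcal{D}^{\mathrm{before}}(v_{s-i}^{(j)})\ge\overline{\mathcal{D}}_B(A_{s-i+1})$ for each $j$. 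Likewise $\tfrac{\Sigma'}{a(i-1)+1}\ge\overline{\mathcal{D}}_B(A_{s-i+1})$ by monotonicity alone. Taking the convex combination with weights $a$ and $a(i-1)+1$ gives $\overline{\mathcal{D}}_M(A_{s-i})\ge\overline{\mathcal{D}}_B(A_{s-i+1})$ directly, with no mediant hypothesis to verify and no dependence on how the sub-steps of $M_{s-i}$ interact. This is exactly the point your ``Main obstacle'' paragraph worried about; the resolution is not to prove the pick-time dangers dominate the before-move average (they need not), but to observe that they dominate the \emph{after-move} average, which is all that is needed.
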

\begin{proof}
The first inequality  is obvious, since during moves of Breaker the Maker degree of vertices in $A_{s-i}$ is unchanged, so the danger $\mathcal{D}(w)=d_B(w)-\frac{2b}{a}d_M(w)$, for $w \in A_{s-i}$ , can only increase. For the second inequality observe that during Maker's moves $M_{s-i}$ the danger $\mathcal{D}(w)$, $w \in A_{s-i}$, never increases\\

\end{proof}

\begin{definition}
We define a function  $g:\{1, \ldots, s\} \rightarrow \mathbb{N}$  where $g(i)$ is the number of edges Breaker has claimed during the first $i-1$ moves of the game  with both endpoints in $A_{i}$.
 \end{definition}
 \begin{lemma}\label{claim1}Let $p$ denote the number of edges $\{x, y\}$ claimed by Breaker during his move in round $s-i$ such that $x, y \in A_{s-i}$ with $1 \leq i \leq s-1$. Then, $p \leq a|A_{s-i+1}|+\left(\begin{array}{c}a \\ 2\end{array}\right)+g(s-i+1)-g(s-i)$.
\end{lemma}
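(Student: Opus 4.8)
The plan is to recast the asserted inequality as a purely combinatorial statement about how many edges of $K_n$ can touch the ``new'' vertices $A_{s-i}\setminus A_{s-i+1}$, by rewriting $p$ and the two values of $g$ as counts of Breaker edges according to whether their endpoints lie in $A_{s-i}$ or in $A_{s-i+1}$.

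First I would record two structural facts about the sets. (a) $A_{s-i+1}\subseteq A_{s-i}$: the list $v_{s-i+1}^{(1)},\dots,v_{s-1}^{(a)},v_s$ defining $A_{s-i+1}$ is a sublist of the one defining $A_{s-i}$, and since Maker's degrees never decrease, any vertex that is dangerous just before Maker's $(s-i+1)$-th move was already dangerous just before his $(s-i)$-th move. (b) $A_{s-i}\setminus A_{s-i+1}\subseteq\{v_{s-i}^{(1)},\dots,v_{s-i}^{(a)}\}$, hence it contains at most $a$ distinct vertices: if $v\in A_{s-i}$ but $v\notin A_{s-i+1}$, then either $v$ does not occur among $v_{s-i+1}^{(1)},\dots,v_{s-1}^{(a)},v_s$, which forces $v$ to be one of $v_{s-i}^{(1)},\dots,v_{s-i}^{(a)}$; or $v$ stopped being dangerous between the two moves, i.e.\ its Maker degree rose from at most $k-1$ to at least $k$ during $M_{s-i}$, the only Maker move in that interval, and in $M_{s-i}$ Maker adds edges only at $v_{s-i}^{(1)},\dots,v_{s-i}^{(a)}$.

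Next I would unfold the definition of $g$. Counting Breaker's moves, $g(s-i)$ is the number of Breaker edges from his first $s-i-1$ moves with both endpoints in $A_{s-i}$, and $p$ is the number from his $(s-i)$-th move with both endpoints in $A_{s-i}$; thus $p+g(s-i)$ equals the number of Breaker edges from his first $s-i$ moves with both endpoints in $A_{s-i}$, while $g(s-i+1)$ is the number of Breaker edges from those same $s-i$ moves with both endpoints in $A_{s-i+1}$. Since $A_{s-i+1}\subseteq A_{s-i}$ by (a), subtracting shows that $p+g(s-i)-g(s-i+1)$ is exactly the number of Breaker edges (from the first $s-i$ moves) that lie inside $A_{s-i}$ and have at least one endpoint in $R:=A_{s-i}\setminus A_{s-i+1}$. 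Forgetting who claimed the edges, this is at most the total number of edges of $K_n$ inside $A_{s-i}$ meeting $R$; since $A_{s-i}\setminus R=A_{s-i+1}$, that number is at most $\binom{|R|}{2}+|R|\,|A_{s-i+1}|\le\binom{a}{2}+a\,|A_{s-i+1}|$ by (b), and rearranging yields $p\le a|A_{s-i+1}|+\binom{a}{2}+g(s-i+1)-g(s-i)$.

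I expect the only real obstacle to be fact (b): one must check carefully that every vertex dropping out of $A_{s-i+1}$ was just played by Maker in round $s-i$, covering both the ``sublist'' reason and the ``loss of danger'' reason, and keep in mind that the $v_{s-i}^{(j)}$ need not be distinct, so the correct size bound on $R$ is $a$ rather than $ai+1$. Everything else is routine bookkeeping with the definitions.
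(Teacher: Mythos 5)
Your proof is correct and follows essentially the same route as the paper: both express $g(s-i)+p$ as the count of Breaker edges inside $A_{s-i}$ after his $(s-i)$th move, compare it with $g(s-i+1)$, and bound the discrepancy by the number of edges in $A_{s-i}$ touching the ``new'' vertices, giving $\binom{a}{2}+a|A_{s-i+1}|$. The only difference is that you explicitly verify the set relations $A_{s-i+1}\subseteq A_{s-i}$ and $A_{s-i}\setminus A_{s-i+1}\subseteq\{v_{s-i}^{(1)},\dots,v_{s-i}^{(a)}\}$, which the paper simply asserts via $A_{s-i}=\{v_{s-i}^{(1)},\dots,v_{s-i}^{(a)}\}\cup A_{s-i+1}$ without justification.
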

\begin{proof}
 Before his $(s-i)$th move, Breaker has claimed exactly $g(s-i)$ edges with both their endpoints in $A_{s-i}$. Thus in round $s-i$, he has claimed exactly $g(s-i)+p$ edges with both their endpoints in $A_{s-i}$.  Note that $ A_{s-i}=\left\{v_{s-i}^{(1)}, \ldots, v_{s-i}^{(a)}\right\} \cup A_{s-i+1}.$ There can be at most $\left(\begin{array}{l}a \\ 2\end{array}\right)$ edges connecting any two vertices from $\left\{v_{s-i}^{(1)}, \ldots, v_{s-i}^{(a)}\right\}$. For each vertex in $\left\{v_{s-i}^{(1)}, \ldots, v_{s-i}^{(a)}\right\}$ there are  at most $|A_{s-i+1}|$ incident vertices in $A_{s-i+1}$. Thus  $g(s-i)+p \leq g(s-i+1)+a|A_{s-i+1}|+\left(\begin{array}{c}a \\ 2\end{array}\right)$, and $p \leq\left(\begin{array}{l}a \\ 2\end{array}\right)+a|A_{s-i+1}|+g(s-i+1)-g(s-i)$.
\end{proof}
 
 \begin{lemma}{}{\label{1}}
 The following two inequalities hold for every $1 \leq i \leq s-1$
 \begin{enumerate}
\item[(i)] $0 \leq \overline{\mathcal{D}}_{M}\left(A_{s-i}\right)-\overline{\mathcal{D}}_{B}\left(A_{s-i}\right) \leq \frac{2 b}{ai+1}  \leq \frac{2 b}{ai}  $
\item[(ii)]
$\overline{\mathcal{D}}_{M}\left(A_{s-i}\right)-\overline{\mathcal{D}}_{B}\left(A_{s-i}\right) \leq \frac{b-a^2+a+\left(\begin{array}{c}
a \\
2
\end{array}\right)+g(s-i+1)-g(s-i)}{a i}+a$
 \end{enumerate}

\end{lemma}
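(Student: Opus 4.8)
Since the multiset $A_{s-i}$ is held fixed throughout round $s-i$, and $\overline{\mathcal{D}}_B(A_{s-i})$ is read off immediately before Breaker's move $B_{s-i}$ while $\overline{\mathcal{D}}_M(A_{s-i})$ is read off immediately after it (and before $M_{s-i}$), the entire difference is produced by $B_{s-i}$ alone. During $B_{s-i}$ every Maker-degree is frozen, so for $w\in A_{s-i}$ the danger $\mathcal{D}(w)=d_B(w)-\frac{2b}{a}d_M(w)$ changes only through $d_B(w)$, and therefore
\[
\overline{\mathcal{D}}_M(A_{s-i})-\overline{\mathcal{D}}_B(A_{s-i})=\frac{\Delta}{ai+1},\qquad \Delta=\sum_{w\in A_{s-i}}\bigl(d_B^{\,\mathrm{after}}(w)-d_B^{\,\mathrm{before}}(w)\bigr),
\]
where $ai+1=|A_{s-i}|$. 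The left inequality in (i) is then exactly Proposition~\ref{0} (equivalently $\Delta\ge 0$, since Breaker's moves only raise $d_B$), so both parts reduce to bounding $\Delta$ from above.

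For (i) I would use a crude endpoint count: Breaker claims $b$ edges in $B_{s-i}$, and each of them raises $\sum_{w\in A_{s-i}}d_B(w)$ by at most $2$ (one for each endpoint lying in $A_{s-i}$), so $\Delta\le 2b$, hence $\overline{\mathcal{D}}_M(A_{s-i})-\overline{\mathcal{D}}_B(A_{s-i})\le\frac{2b}{ai+1}\le\frac{2b}{ai}$.

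For (ii) I would sharpen this by sorting Breaker's $b$ edges of $B_{s-i}$ according to how many endpoints they have in $A_{s-i}$. With $p$ the number of them having \emph{both} endpoints in $A_{s-i}$, those contribute at most $2$ apiece, the remaining at most $b-p$ edges at most $1$ apiece, and edges disjoint from $A_{s-i}$ contribute nothing; thus $\Delta\le 2p+(b-p)=b+p$. Applying Lemma~\ref{claim1} together with $|A_{s-i+1}|=a(i-1)+1$ yields $p\le a^{2}i-a^{2}+a+\binom{a}{2}+g(s-i+1)-g(s-i)$, so
\[
\overline{\mathcal{D}}_M(A_{s-i})-\overline{\mathcal{D}}_B(A_{s-i})\le\frac{b+a^{2}i-a^{2}+a+\binom{a}{2}+g(s-i+1)-g(s-i)}{ai+1}.
\]
The numerator here equals $b+p\ge 0$, so the bound only grows when the denominator is dropped from $ai+1$ to $ai$; separating off the term $a^{2}i/(ai)=a$ then leaves precisely $\frac{b-a^{2}+a+\binom{a}{2}+g(s-i+1)-g(s-i)}{ai}+a$, which is (ii).

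The genuinely delicate point --- and the place where this has to differ from Gebauer--Szab\'o --- is that $A_{s-i}$ is a \emph{multiset}: a vertex Maker selected repeatedly while it was still dangerous sits in $A_{s-i}$ with multiplicity, so a single new Breaker edge can bump $\sum_{w\in A_{s-i}}d_B(w)$ by more than the naive amount used in the two displays above. Those endpoint counts are really statements about the multiplicity-weighted sum, not a set sum, so making them rigorous requires exactly the multiset-average bookkeeping the paper advertises as new --- tracking multiplicities along the nested chain $A_s\subseteq A_{s-1}\subseteq\cdots\subseteq A_{s-i}$ and how Breaker's freshly claimed edges distribute over repeated occurrences (using, e.g., the defining constraint $d_M(w)\le k-1$ for $w\in A_{s-i}$). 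I expect this, and not the closing arithmetic, to be where the real effort lies.
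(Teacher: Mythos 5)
Your derivations of (i) and (ii) match the paper's proof step for step: (i) follows from the crude bound $\Delta\le 2b$ together with Proposition~\ref{0}, and (ii) follows by splitting Breaker's edges into the $p$ with both endpoints in $A_{s-i}$ (contributing at most $2$ each) and the remaining $q=b-p$ (contributing at most $1$ each), giving $\Delta\le 2p+q=b+p$, then invoking Lemma~\ref{claim1} with $|A_{s-i+1}|=a(i-1)+1$ and passing the denominator from $ai+1$ to $ai$. One minor slip: the displayed numerator $b+a^{2}i-a^{2}+a+\binom{a}{2}+g(s-i+1)-g(s-i)$ is an \emph{upper bound} for $b+p$, not equal to it; the conclusion still holds because that upper bound is $\ge b+p\ge 0$, which is all the denominator change needs.

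Your closing paragraph, however, misreads the paper. The paper's proof contains no multiplicity-aware bookkeeping of the kind you anticipate. It asserts, verbatim, that ``the sum of Breaker degrees of vertices in $A_{s-i}$ can increase in total by $2b$'' for part (i), and that $\sum_{u\in A_{s-i}}\bigl(d_B(u)-\tfrac{2b}{a}d_M(u)\bigr)$ ``increases at most by $2p+q$'' for part (ii) --- exactly the bare endpoint count you wrote. So you have reproduced the paper's argument, not an abridgement of it. The subtlety you flag --- that one fresh Breaker edge incident to a vertex of multiplicity $m$ in the multiset $A_{s-i}$ increases the multiplicity-weighted sum by $m$, not $1$ --- is left unaddressed by the paper as well. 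If this point genuinely requires extra work (e.g. a bound on multiplicities, or a reinterpretation of $\sum_{w\in A_{s-i}}$), then both your proof and the paper's share the same gap; it is not hidden machinery you failed to supply.
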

\begin{proof}
\begin{enumerate}
\item[(i)] The first inequality is clear by Proposition \ref{0}.  During  round $s-i$, Breaker claims $b$ edges. So, the sum of Breaker degrees of  vertices in  $A_{s-i}$ can increase in total by  $2 b$. Since  $\overline{\mathcal{D}}_{B}\left(A_{s-i}\right)$ is linear function in the danger values of vertices in $A_{s-i}$, $\overline{\mathcal{D}}_{B}\left(A_{s-i}\right)$ increases  at most by  an additive term of $\frac{2 b}{ai+1}$ during  Breaker's move.
\item[(ii)]  Let $q=b-p$, with $p$  defined as in  Lemma \ref{claim1}.   During Breaker's move in round $s-i$ the sum $\sum_{u \in A_{s-i}} d_{B}(u)- \frac{2b}{a} d_M(u)$  increases at most by $2 p+q=p+b$. Consequently,  $\overline{\mathcal{D}}_{B}\left(A_{s-i}\right)$ increases by at most an additive term of $\frac{b+p}{ai+1}$.\\ 
With Lemma \ref{claim1}  we can conclude the proof:
\begin{align*}
\overline{\mathcal{D}}_{M}\left(A_{s-i}\right)-\overline{\mathcal{D}}_{B}\left(A_{s-i}\right) &\leq \frac{b+p}{ai+1}\\
&\leq \frac{b+a|A_{s-i+1}|+\left(\begin{array}{c}
a \\
2
\end{array}\right)+g(s-i+1)-g(s-i)}{ai+1}\\& \leq \frac{b+a\left(a(i-1)+1\right)+\left(\begin{array}{c}
a \\
2
\end{array}\right)+g(s-i+1)-g(s-i)}{ai}\\
&\leq \frac{b-a^2+a+\left(\begin{array}{c}
a \\
2
\end{array}\right)+g(s-i+1)-g(s-i)}{a i}+a
\end{align*}
\end{enumerate}
\end{proof}

\begin{corollary}{\label{cor}}{}
For every $i$, $1\leq i \leq s-1$,  we have

 $\overline{\mathcal{D}}_{B}\left(A_{s-i}\right)\geq \overline{\mathcal{D}}_{B}\left(A_{s-i+1}\right)-\min\left\{\frac{2b}{ai},\frac{b-a^2+a+\left(\begin{array}{c}
a \\
2
\end{array}\right)+g(s-i+1)-g(s-i)}{ai}+a \right\} $

\end{corollary}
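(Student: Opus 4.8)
The plan is to chain together the two estimates from Proposition \ref{0} and Lemma \ref{1}. Recall that Proposition \ref{0} gives $\overline{\mathcal{D}}_{M}(A_{s-i}) \geq \overline{\mathcal{D}}_{B}(A_{s-i+1})$, which is exactly what connects the average danger at the start of Maker's moves in round $s-i$ with the average danger at the start of round $s-i+1$. Rearranging, $\overline{\mathcal{D}}_{B}(A_{s-i+1}) \leq \overline{\mathcal{D}}_{M}(A_{s-i})$. Then I would substitute into the right-hand side the two upper bounds for $\overline{\mathcal{D}}_{M}(A_{s-i})$ in terms of $\overline{\mathcal{D}}_{B}(A_{s-i})$: from Lemma \ref{1}(i), $\overline{\mathcal{D}}_{M}(A_{s-i}) \leq \overline{\mathcal{D}}_{B}(A_{s-i}) + \frac{2b}{ai}$, and from Lemma \ref{1}(ii), $\overline{\mathcal{D}}_{M}(A_{s-i}) \leq \overline{\mathcal{D}}_{B}(A_{s-i}) + \frac{b-a^2+a+\binom{a}{2}+g(s-i+1)-g(s-i)}{ai} + a$.

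Putting these together, $\overline{\mathcal{D}}_{B}(A_{s-i+1}) \leq \overline{\mathcal{D}}_{B}(A_{s-i}) + \min\left\{\frac{2b}{ai}, \frac{b-a^2+a+\binom{a}{2}+g(s-i+1)-g(s-i)}{ai} + a\right\}$, since both inequalities hold simultaneously we may take the smaller of the two increments. Rearranging this single inequality for $\overline{\mathcal{D}}_{B}(A_{s-i})$ yields precisely the claimed bound $\overline{\mathcal{D}}_{B}(A_{s-i}) \geq \overline{\mathcal{D}}_{B}(A_{s-i+1}) - \min\{\cdots\}$. So the corollary is essentially a two-line bookkeeping consequence of the preceding results.

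There is really no substantive obstacle here; the only points requiring a word of care are (a) confirming that the index ranges match — Lemma \ref{1} and Proposition \ref{0} are both stated for $1 \leq i \leq s-1$, which is exactly the range asserted in the corollary, and (b) making sure the two quantities being minimized are the same ones that appear (up to the trivial step $\frac{2b}{ai+1} \leq \frac{2b}{ai}$ already carried out inside Lemma \ref{1}(i)). I would write this up as a short displayed chain of inequalities, taking the minimum at the end, and then state the rearrangement.

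\begin{proof}
By Proposition \ref{0}, for every $1 \leq i \leq s-1$ we have $\overline{\mathcal{D}}_{M}\left(A_{s-i}\right) \geq \overline{\mathcal{D}}_{B}\left(A_{s-i+1}\right)$, hence
\[
\overline{\mathcal{D}}_{B}\left(A_{s-i+1}\right) \leq \overline{\mathcal{D}}_{M}\left(A_{s-i}\right)
= \overline{\mathcal{D}}_{B}\left(A_{s-i}\right) + \left(\overline{\mathcal{D}}_{M}\left(A_{s-i}\right) - \overline{\mathcal{D}}_{B}\left(A_{s-i}\right)\right).
\]
By Lemma \ref{1}(i) the difference $\overline{\mathcal{D}}_{M}\left(A_{s-i}\right) - \overline{\mathcal{D}}_{B}\left(A_{s-i}\right)$ is at most $\frac{2b}{ai}$, and by Lemma \ref{1}(ii) it is also at most $\frac{b-a^2+a+\binom{a}{2}+g(s-i+1)-g(s-i)}{ai}+a$; since both bounds hold, it is at most their minimum. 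Therefore
\[
\overline{\mathcal{D}}_{B}\left(A_{s-i+1}\right) \leq \overline{\mathcal{D}}_{B}\left(A_{s-i}\right) + \min\left\{\frac{2b}{ai},\ \frac{b-a^2+a+\binom{a}{2}+g(s-i+1)-g(s-i)}{ai}+a\right\},
\]
and rearranging gives
\[
\overline{\mathcal{D}}_{B}\left(A_{s-i}\right) \geq \overline{\mathcal{D}}_{B}\left(A_{s-i+1}\right) - \min\left\{\frac{2b}{ai},\ \frac{b-a^2+a+\binom{a}{2}+g(s-i+1)-g(s-i)}{ai}+a\right\},
\]
as claimed.
\end{proof}
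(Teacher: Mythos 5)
Your proof is correct and takes essentially the same route as the paper: apply Proposition~\ref{0} to bound $\overline{\mathcal{D}}_{B}(A_{s-i+1})$ by $\overline{\mathcal{D}}_{M}(A_{s-i})$, then invoke both parts of Lemma~\ref{1} and take the minimum. The paper phrases the first step as bounding the difference $\overline{\mathcal{D}}_{B}(A_{s-i+1})-\overline{\mathcal{D}}_{B}(A_{s-i})$, but this is the same rearrangement you perform.
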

\begin{proof}
By Proposition \ref{0} we have $\overline{\mathcal{D}}_{B}\left(A_{s-i+1}\right)-\overline{\mathcal{D}}_{B}\left(A_{s-i}\right)\leq \overline{\mathcal{D}}_{M}\left(A_{s-i}\right)-\overline{\mathcal{D}}_{B}\left(A_{s-i}\right) $, and by Lemma \ref{1} we get the assertion of the corollary. 
\end{proof} 
\begin{remark}\label{Rem1}
We  use  certain well-known inequalities for the $k$th harmonic number $H_k=\sum_{i=1}^k1/i$, 
\begin{itemize}
\item[$\rm{a)}$] $\,\,\forall k\in \N ,\,\, \ln(k+1)\leq H_k\leq \ln(k)+1 $;
\item[$\rm{b)}$] $\,\,\forall i,j\in \N ,\,\, i\geq j\,\, \Longrightarrow\,\,  H_{j}-H_i \leq \ln(j)-\ln(i)$.
\end{itemize}
\end{remark}

\begin{lemma}{\label{propp}}
 $\overline{\mathcal{D}}_B(A_s)\leq\frac{2b}{a}\left(\ln(s-1)+1 \right). $
\end{lemma}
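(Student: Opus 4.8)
The plan is to telescope the per-round estimate from Corollary \ref{cor} down from round $1$ (where $i=s-1$, so $A_1$ is the full multiset of Maker-picked vertices together with $v_s$) to round $s-1$ (where $i=1$, so $A_{s-1} = \{v_{s-1}^{(1)},\dots,v_{s-1}^{(a)},v_s\}$), and finally to relate $\overline{\mathcal{D}}_B(A_{s-1})$ to $\overline{\mathcal{D}}_B(A_s)$. Writing $\Delta_i := \overline{\mathcal{D}}_B(A_{s-i+1}) - \overline{\mathcal{D}}_B(A_{s-i})$, Corollary \ref{cor} gives $\Delta_i \le \min\{\,2b/(ai),\ (b - a^2 + a + \binom a2 + g(s-i+1)-g(s-i))/(ai) + a\,\}$ for $1 \le i \le s-1$. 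Summing $\overline{\mathcal{D}}_B(A_s) = \overline{\mathcal{D}}_B(A_1) + \sum_{i=1}^{s-1}\Delta_i$ and bounding $\overline{\mathcal{D}}_B(A_1)$: at the very start of the game all Breaker and Maker degrees are $0$, so every danger value is $0$ and hence $\overline{\mathcal{D}}_B(A_1) = 0$ (the multiset $A_1$ consists of vertices each of which was dangerous before Maker's first move, i.e.\ before any edge was claimed). That kills the boundary term, leaving $\overline{\mathcal{D}}_B(A_s) \le \sum_{i=1}^{s-1}\Delta_i$.

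Next I would use the first branch of the $\min$ for all terms, so $\overline{\mathcal{D}}_B(A_s) \le \sum_{i=1}^{s-1} \frac{2b}{ai} = \frac{2b}{a} H_{s-1}$, and then apply Remark \ref{Rem1}(a), $H_{s-1} \le \ln(s-1) + 1$, to get exactly $\overline{\mathcal{D}}_B(A_s) \le \frac{2b}{a}(\ln(s-1)+1)$. The one subtlety is the indexing at $i = s-1$: there Corollary \ref{cor} compares $\overline{\mathcal{D}}_B(A_2)$ with $\overline{\mathcal{D}}_B(A_1)$, and $A_1$ is the full multiset $\{v_1^{(1)},\dots,v_{s-1}^{(a)},v_s\}$ restricted to those dangerous before round $1$ — I should note that this restriction is the whole set since nothing has been claimed, so $|A_1| = a(s-1)+1$ is consistent with the denominator $a(s-1)+1$ used in the definition of $\overline{\mathcal{D}}_B$, and $\overline{\mathcal{D}}_B(A_1)=0$ as claimed. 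I might instead prefer to not even invoke $A_1$ and simply start the telescoping from the observation that after round $1$ Breaker has claimed $b$ edges, but the cleanest route is the boundary-term-is-zero argument above.

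The genuinely routine part is the $\binom a2$, $-a^2+a$, and $g$-difference terms in the second branch of the $\min$: they are not needed for this particular lemma since we only use the first branch $2b/(ai)$, so they can be ignored here — they will matter in a later, sharper estimate. The only thing to be careful about is whether we are allowed to use the $2b/(ai)$ bound for every $i$ including $i$ where Breaker's move would exceed the number of unclaimed edges; but since we are proving an upper bound on $\overline{\mathcal{D}}_B(A_s)$ and the per-round increase can only be at most $2b/(ai)$ regardless, this is fine. So the only real obstacle — and it is minor — is pinning down the $\overline{\mathcal{D}}_B(A_1)=0$ boundary condition cleanly and making sure the harmonic-sum indices line up; the rest is a two-line telescoping followed by Remark \ref{Rem1}(a).

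\begin{proof}
For $1 \le i \le s-1$ write $\Delta_i := \overline{\mathcal{D}}_B(A_{s-i+1}) - \overline{\mathcal{D}}_B(A_{s-i})$, so that
\[
\overline{\mathcal{D}}_B(A_s) = \overline{\mathcal{D}}_B(A_1) + \sum_{i=1}^{s-1} \Delta_i .
\]
Before Maker's first move no edge of $K_n$ has been claimed, so $d_B(w) = d_M(w) = 0$ and hence $\mathcal{D}(w) = 0$ for every vertex $w$; in particular every vertex of the multiset is dangerous before round $1$, so $A_1 = \{v_1^{(1)},\dots,v_{s-1}^{(a)},v_s\}$ has $a(s-1)+1$ elements and $\overline{\mathcal{D}}_B(A_1) = 0$. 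By Corollary \ref{cor}, $\Delta_i \le \frac{2b}{ai}$ for every $1 \le i \le s-1$. Therefore
\[
\overline{\mathcal{D}}_B(A_s) \le \sum_{i=1}^{s-1} \frac{2b}{ai} = \frac{2b}{a}\, H_{s-1} \le \frac{2b}{a}\left(\ln(s-1) + 1\right),
\]
where the last inequality is Remark \ref{Rem1}(a).
\end{proof}
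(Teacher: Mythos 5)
Your proof is correct and takes essentially the same route as the paper: you observe $\overline{\mathcal{D}}_B(A_1)=0$ because no edges have been claimed, telescope via Corollary \ref{cor} using only the $2b/(ai)$ branch of the minimum, and finish with $H_{s-1}\le\ln(s-1)+1$. The paper writes the telescoping as a chain of inequalities starting from $0=\overline{\mathcal{D}}_B(A_1)$ rather than summing increments $\Delta_i$, but the content is identical.
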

\begin{proof}
Before round one starts, no edges has been choosen neither by Breaker nor by Maker, so $\overline{\mathcal{D}}_{B}\left(A_{1}\right)=0$. We have
\begin{align}
0&=\overline{\mathcal{D}}_{B}\left(A_{1}\right)\nonumber\\
& \geq \overline{\mathcal{D}}_{B}\left(A_{2}\right)-\frac{2b}{a(s-1)} \tag{by Corollary \ref{cor} }\\& \,\, \vdots \nonumber \\&
 \geq \overline{\mathcal{D}}_{B}\left(A_{s}\right)-\frac{2b}{a(s-1)}-\cdots-\frac{2b}{a} \tag{by Corollary \ref{cor} } \\
&=\overline{\mathcal{D}}_B(A_s) -\frac{2b}{a} H_{s-1} \nonumber \\
&\geq \overline{\mathcal{D}}_B(A_s)-\frac{2b}{a}\left(\ln(s-1)+1 \right).
\end{align}
using $H_{s-1}\leq \ln(s-1)+1$. So   $\overline{\mathcal{D}}_B(A_s)\leq\frac{2b}{a}\left(\ln(s-1)+1 \right).  $
\end{proof}

\begin{lemma}{\label{prop}}
Let $r\in \N$.
\begin{enumerate}
\item[(i)] If $s> r$, then
$$\overline{\mathcal{D}}_B(A_s)\leq \frac{b}{a} \left(2\ln(s-1)-\ln(r)+1 \right) -\frac{(a-1)}{2} \ln\left(r \right)+ ra.$$
\item[(ii)] If $s \leq r$, then
$$\overline{\mathcal{D}}_B(A_s) \leq \frac{b}{a}\left(1+\ln(r) \right)-\frac{(a-1)}{2} \ln(r)+r a.$$
\end{enumerate}

\end{lemma}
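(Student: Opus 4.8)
The plan is to build on Lemma \ref{propp} and Corollary \ref{cor} by summing the one-step drops of $\overline{\mathcal{D}}_B$ more carefully, separating the range of indices into a ``head'' and a ``tail'' at the cutoff $r$. Recall that by Corollary \ref{cor}, for each $1\le i\le s-1$ we have
\[
\overline{\mathcal{D}}_B(A_{s-i})\ \geq\ \overline{\mathcal{D}}_B(A_{s-i+1})-\min\!\left\{\tfrac{2b}{ai},\ \tfrac{b-a^2+a+\binom a2+g(s-i+1)-g(s-i)}{ai}+a\right\}.
\]
Telescoping from $\overline{\mathcal{D}}_B(A_1)=0$ up to $\overline{\mathcal{D}}_B(A_s)$ and choosing, for each index, whichever of the two bounds in the $\min$ is more convenient, I would use the \emph{second} bound for the small indices $i$ (those corresponding to the last $r$ rounds, where the harmonic tail is dangerous and the $\binom a2$/$g$-terms are cheap on a per-round basis) and the \emph{first} bound $\tfrac{2b}{ai}$ for the large indices. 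Concretely, in case (i) where $s>r$, split $\sum_{i=1}^{s-1}$ into $\sum_{i=1}^{r}$ using bound (ii) of Lemma \ref{1} and $\sum_{i=r+1}^{s-1}$ using bound (i).

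For the head sum $\sum_{i=1}^{r}$ of the second expression, the key observation is that the telescoping differences $g(s-i+1)-g(s-i)$ themselves telescope: $\sum_{i=1}^{r}\big(g(s-i+1)-g(s-i)\big)=g(s)-g(s-r)\le g(s)$, and $g(s)$ is at most the total number of Breaker edges inside $A_s$, which is bounded since $|A_s|$ is small — one should get an $O(1)$ or lower-order contribution here, absorbed into the $ra$ term (each of the $r$ rounds contributes an additive $a$, giving the $+ra$). The remaining pieces $\tfrac{b-a^2+a+\binom a2}{ai}$ summed over $i=1,\dots,r$ give $\tfrac{b-a^2+a+\binom a2}{a}H_r$; since $-a^2+a+\binom a2=-a^2+a+\tfrac{a(a-1)}2=-\tfrac{a(a-1)}2=-\tfrac{a^2-a}2$, this is $\big(\tfrac ba-\tfrac{a-1}{2}\big)H_r$, which with $H_r\le\ln r+1$ yields the $-\tfrac{a-1}{2}\ln r$ term (and the $+1$ gets swallowed into constants, i.e.\ into $\tfrac ba$). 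For the tail $\sum_{i=r+1}^{s-1}\tfrac{2b}{ai}=\tfrac{2b}{a}(H_{s-1}-H_r)$, apply Remark \ref{Rem1}(b) to get $\tfrac{2b}{a}(\ln(s-1)-\ln r)$. Adding the head's $\tfrac ba(1+\ln r)$-type contribution to the tail's $\tfrac{2b}{a}(\ln(s-1)-\ln r)$ produces $\tfrac ba(2\ln(s-1)-\ln r+1)$, matching the claimed bound. Case (ii), $s\le r$, is simpler: there is no tail, one just uses Lemma \ref{propp}'s style of telescoping with bound (ii) of Lemma \ref{1} over all $s-1\le r$ steps, giving $\tfrac ba(1+\ln r)-\tfrac{a-1}{2}\ln r+ra$ directly (using $H_{s-1}\le H_r\le\ln r+1$ and monotonicity).

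The main obstacle I anticipate is controlling the $g$-terms and the per-round additive $a$ cleanly — one must verify that $g(s-r)\ge 0$ so the telescoped $g$-sum is genuinely $\le g(s)$, and that $g(s)$ (Breaker edges wholly inside the small set $A_s$, where $|A_s|=1$, so in fact $g(s)=0$, and more generally $|A_j|=a(s-j)+1$) does not blow up the bound; tracking how $|A_{s-i+1}|=a(i-1)+1$ was used inside Lemma \ref{1}(ii) to replace $a|A_{s-i+1}|$ by $a^2(i-1)+a$ and hence produce the clean $i$-dependence is the delicate bookkeeping step. A secondary subtlety is making sure the ``$+1$'' slack in each application of $H_k\le\ln k+1$ and the constant from $H_r\le\ln r+1$ are all absorbable into the stated form without introducing an extra $\ln$-order error; since these are all $O(b/a)$ or $O(a)$ and the statement is stated with explicit $+1$'s and a free $+ra$, this should go through, but it requires care to present honestly rather than hand-wave.
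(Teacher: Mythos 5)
Your proof sketch follows the same road map as the paper: split the telescoping chain at the cutoff $r$, apply the first bound of Corollary~\ref{cor} for $i>r$ and the second for $i\le r$, simplify the constant $-a^2+a+\binom a2=-\tfrac{a(a-1)}{2}$, and absorb the per-step $+a$ into $ra$. Two steps, however, are glossed over in a way that would break the argument if written out as stated.

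First, the $g$-terms do \emph{not} simply telescope, because each difference $g(s-i+1)-g(s-i)$ carries the weight $\tfrac{1}{ai}$. The paper instead performs summation by parts, rewriting
$\sum_{i=1}^{r}\tfrac{g(s-i+1)-g(s-i)}{ai}=\tfrac{g(s)}{a}-\sum_{i=1}^{r-1}\tfrac{g(s-i)}{ai(i+1)}-\tfrac{g(s-r)}{ar}$,
which is $\le 0$ because $g(s)=0$ (here $|A_s|=1$) and $g\ge 0$. The unweighted identity $\sum(g(s-i+1)-g(s-i))=g(s)-g(s-r)$ that you invoke does not directly yield this; the Abel rearrangement is the needed mechanism. (The paper's annotation ``$(***)\ge 0$'' is a sign typo; the argument in fact shows $(***)\le 0$, which is what allows it to be discarded.)

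Second, in case (ii) your one-sided estimate $H_{s-1}\le H_r\le\ln r+1$ does not suffice, because the coefficient of $H_{s-1}$ in the accumulated drop is $\tfrac ba-\tfrac{a-1}{2}$, a difference of mixed sign. The paper separately bounds $\tfrac ba H_{s-1}\le\tfrac ba(\ln s+1)$ from above and $\tfrac{a-1}{2}H_{s-1}\ge\tfrac{a-1}{2}\ln s$ from below, and then passes from $s$ to $r$ using that $x\mapsto\tfrac{a-1}{2}\ln x-xa$ is decreasing on $[1,\infty)$. Without this two-sided treatment and the monotonicity step, the inequality in the claimed direction is not justified. These are fixable gaps, and once they are filled your proof coincides with the paper's.
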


\begin{proof}
\begin{enumerate}
\item[(i)] We have
\begin{align*}
0&=\overline{\mathcal{D}}_{B}\left(A_{1}\right)\\& \geq \overline{\mathcal{D}}_{B}\left(A_{2}\right)-\frac{2b}{a(s-1)}  \tag{by Corollary \ref{cor} }\\
&\,\,\vdots\\
&\geq \overline{\mathcal{D}}_{B}\left(A_{s-r}\right)-\frac{2b}{a(s-1)}-\cdots-\frac{2b}{a(r+1)} \qquad  \tag{ by Corollary \ref{cor} } \\
&\geq \overline{\mathcal{D}}_{B}\left(A_{s-r+1}\right)-\frac{2b}{a(s-1)}-\cdots-\frac{2b}{a(r+1)}\\&\,\,\,\,-\frac{b-a^2+a+\left(\begin{array}{c}
a \\
2
\end{array}\right)+g(s-r+1)-g(s-r)}{a r}-a  \tag{ by Corollary \ref{cor} }\\
&\,\,\vdots
\\
&\geq  \overline{\mathcal{D}}_{B}\left(A_{s}\right)-\frac{2b}{a(s-1)}-\cdots-\frac{2b}{a(r+1)}\\&\,\,\,\,-\frac{b-a^2+a+\left(\begin{array}{c}
a \\
2
\end{array}\right)+g(s-r+1)-g(s-r)}{a r}\\&\,\,\,\,-\cdots-\frac{b-a^2+a+\left(\begin{array}{c}
a \\
2
\end{array}\right)+g(s)-g(s-1)}{a }- r a   \tag{ by Corollary \ref{cor} }\\
&= \overline{\mathcal{D}}_B(A_s) -\frac{2b}{a(s-1)}-\cdots-\frac{2b}{a(r+1)}\\&\,\,\,\,-\frac{b-a^2+a+\left(\begin{array}{c}
a \\
2
\end{array}\right)+g(s-r+1)-g(s-r)}{a r}\\&\,\,\,\, -\cdots-\frac{b-a^2+a+\left(\begin{array}{c}
a \\
2
\end{array}\right)+g(s)-g(s-1)}{a }- r a\\ 
&= \overline{\mathcal{D}}_B(A_s) -\frac{2b}{a}\underbrace{\left(\frac{1}{s-1}+\cdots+\frac{1}{r+1}\right)}_{(*)} \\
&\qquad -\underbrace{\left(  \frac{b-a^2+a+\left(\begin{array}{c} a\\2 \end{array} \right)}{a r} +\cdots+\frac{b-a^2+a+\left(\begin{array}{c}a\\2 \end{array}\right)}{a}\right)}_{(**)}
\\
&\qquad -\underbrace{\left(\frac{g(s-r+1)-g(s-r)}{a r} +\cdots+\frac{g(s)-g(s-1)}{a }\right)}_{(***)}- r a\\
&= \overline{\mathcal{D}}_B(A_s)-\frac{2b}{a}\underbrace{\left(H_{s-1}-H_r \right)}_{=(*)}-\underbrace{\left(\frac{b}{a} H_r-aH_r+H_r+\frac{a-1}{2}H_r\right)}_{=(**)}\\&\qquad -\underbrace{\left(-\sum_{i=1}^{r-1}\frac{1}{ai(i+1)}g(s-i)+\frac{g(s)}{a}-\frac{g(s-r)}{ar}\right)}_{=(***)}-r a\\
&\geq \overline{\mathcal{D}}_B(A_s)-\frac{b}{a}\left(2\left(H_{s-1}-H_r \right)+ H_r \right)+\frac{a-1}{2}H_r-r a \tag{because of $(***) \geq 0$ due to  $g(s)=0$ , and  $g(s-i)\geq 0$ } \\
&\geq \overline{\mathcal{D}}_B(A_s)-\frac{b}{a}\left(2 (\ln(s-1)-\ln(r)) + \ln(r)+1 \right)+\frac{(a-1)}{2} \ln(r)-r a.   \\
\end{align*}
It follows that
$$\overline{\mathcal{D}}_B(A_s)\leq \frac{b}{a} \left(2 \ln(s-1) - \ln(r)+1 \right) -\frac{(a-1)}{2} \ln\left(r \right)+ ra.$$
\item[(ii)] We have
\begin{align*}
0&=\overline{\mathcal{D}}_{B}\left(A_{1}\right) \\& \geq \overline{\mathcal{D}}_{B}\left(A_{2}\right)-\frac{b-a^2+a+\left(\begin{array}{c}
a \\
2
\end{array}\right)+g(2)-g(1)}{a(s-1)}-a   \tag{by Corollary \ref{cor} } \\&\,\, \vdots \\
&\geq  \overline{\mathcal{D}}_{B}\left(A_{s}\right)-\frac{b-a^2+a+\left(\begin{array}{c}
a \\
2
\end{array}\right)+g(2)-g(1)}{a(s-1)}\\& \,\,\,\,-\frac{b-a^2+a+\left(\begin{array}{c}
a \\
2
\end{array}\right)+g(3)-g(2)}{a(s-2)}-\cdots \\& \,\,\,\,-\frac{b-a^2+a+\left(\begin{array}{c}
a \\
2
\end{array}\right)+g(s)-g(s-1)}{a}-(s-1)a \tag{by Corollary \ref{cor} } \\
&= \overline{\mathcal{D}}_B(A_s)-\frac{b}{a}H_{s-1}+aH_{s-1}-H_{s-1}-\frac{(a-1)}{2} H_{s-1}\\&\,\,\,\,+\sum_{i=1}^{s-2}\frac{1}{ai(i+1)}g(s-i)-\frac{g(s)}{a}+\frac{g(1)}{a(s-1)}-(s-1)a  \\ 
&\geq \overline{\mathcal{D}}_B(A_s)-\frac{b}{a}H_{s-1}+\frac{(a-1)}{2} H_{s-1}-(s-1) a  \tag{since  $g(s)=0$  and  $g(s-i)\geq 0$}\\ 
&> \overline{\mathcal{D}}_B(A_s)-\frac{b}{a}\left(1+\ln(s) \right)+\frac{(a-1)}{2} \ln(s)-s a \\
&\geq \overline{\mathcal{D}}_B(A_s)-\frac{b}{a}\left(1+\ln(r) \right)+\frac{(a-1)}{2} \ln(r)-r a.\\
\end{align*}
Since  $s \leq r$  and   $x\mapsto \frac{(a-1)}{2} \ln(x)-x a$   is decreasing on  $[1,+\infty[$, it follows that
$$ \overline{\mathcal{D}}_B(A_s) < \frac{b}{a}\left(1+\ln(r) \right)-\frac{(a-1)}{2} \ln(r)+r a$$
\end{enumerate}

\end{proof}

\begin{theorem}{\label{main}}
We assume that Maker plays according to the Min-Deg-Strategy.
\begin{itemize}
\item[(i)] For every $\varepsilon > 0$ and $k \in \N$ there is an $n_0(\varepsilon) \in \N$
such that for every $n \geq n_0(\varepsilon)$,  Maker can build a spanning subgraph of $K_n$ with minimum degree at least $ k=o(\ln(n))$ if $a = o\left(\sqrt{\frac{n}{\ln(n)}}\right)$ and $b \leq (1-\varepsilon)\frac{an}{a+\ln(n)}$.
\item[(ii)]  For every $\varepsilon > 0$ and $k \in \N$ there is an $n_0(\varepsilon) \in \N$
such that for every $n \geq n_0(\varepsilon)$, Maker can build a spanning subgraph of $K_n$ with minimum degree at least $k$ if $a = \Omega\left(\sqrt{\frac{n}{\ln(n)}}\right)$, $k=o(a)$ and   $b \leq  (1-\varepsilon)n$.
\end{itemize}
\end{theorem}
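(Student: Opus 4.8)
The plan is to argue by contradiction, in the spirit of Gebauer and Szab\'o \cite{gebauer2009asymptotic}, but feeding the refined estimate of Lemma \ref{prop} with a parameter $r$ chosen one slowly‑growing factor below the natural value. Suppose Breaker, playing against Maker's Min‑Deg‑Strategy, can force some vertex $v$ to end the game with $d_M(v)\le k-1$. Once every edge of $K_n$ has been claimed, $d_B(v)=n-1-d_M(v)\ge n-k$. Let $t$ be the round in which the last edge at $v$ is claimed. Since Breaker claims at most $b$ edges at $v$ per round but must claim at least $n-k$ of them in total, $tb\ge n-k$; as $b\le b_{\max}:=(1-\varepsilon)\frac{an}{a+\ln n}\le(1-\varepsilon)n$ in case (i) and $b\le(1-\varepsilon)n$ in case (ii), this forces $t\ge 2$ for $n$ large. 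Moreover, before Maker's move in each of the rounds $1,\dots,t-1$ the vertex $v$ still has an unclaimed incident edge, so $v$ is a feasible target for Maker throughout; I would therefore instantiate the framework of the preceding lemmas with $s:=t$ and $v_s:=v$.

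First I would record the lower bound. Just before round $t$ one has $d_M(v)\le k-1$, while Breaker has already claimed at least $d_B(v)-b\ge n-k-b$ edges at $v$, so
$$\overline{\mathcal{D}}_B(A_t)=\mathcal{D}(v)\ \ge\ (n-k-b)-\frac{2b}{a}(k-1)\ \ge\ n-b-o(n),$$
the last step using $\frac{b}{a}=O\!\big(\tfrac{n}{a+\ln n}\big)$ together with $k=o(\ln n)$ in case (i) (resp.\ $k=o(a)$ in case (ii)), which make both $k$ and $\frac{2b}{a}(k-1)$ negligible compared with $n$.

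For the upper bound I would fix $\omega=\omega(n)\to\infty$ with $\omega=o(\ln n)$, set $r:=\big\lceil\tfrac{n}{\omega(a+\ln n)}\big\rceil$ (one may assume $a=o(n)$, otherwise $a$ exceeds a constant fraction of $n$ and Maker wins by a direct and far simpler argument), and note that, since each round removes $a+b$ edges, $t-1\le\binom{n}{2}/(a+b)\le n^2/(2b)$, hence $\ln(t-1)\le 2\ln n-\ln(2b)$. Plugging $r$ and this bound into Lemma \ref{prop} (part (i) if $t>r$, part (ii) otherwise) and discarding the nonpositive term $-\tfrac{a-1}{2}\ln r$ gives $\overline{\mathcal{D}}_B(A_t)\le\frac{b}{a}\big(2\ln(t-1)-\ln r+1\big)+ra$, with $\frac{b}{a}(1+\ln r)$ replacing the first term in the second case. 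Here $ra\le\frac{n}{\omega}+a=o(n)$, and the first term is increasing in $b$ on $(0,b_{\max}]$, so it is at most its value at $b_{\max}$; using $\ln b_{\max}=(1-o(1))\ln n$ and, when $a=O(\ln n)$, $\ln r=(1-o(1))\ln n$, that value is $(1-\varepsilon)(1+o(1))\tfrac{n\ln n}{a+\ln n}$, while when $a=\omega(\ln n)$ the prefactor $\tfrac{b_{\max}}{a}$ is itself $o(n/\ln n)$ and the term is $o(n)$. Writing $\mu:=\tfrac{a}{a+\ln n}\in(0,1)$, so that $\tfrac{n\ln n}{a+\ln n}=n(1-\mu)$, in all cases
$$\overline{\mathcal{D}}_B(A_t)\ \le\ (1-\varepsilon)(1+o(1))\,n(1-\mu)+o(n).$$

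To finish, in case (i) the hypothesis $b\le b_{\max}=(1-\varepsilon)n\mu$ makes the lower bound $\overline{\mathcal{D}}_B(A_t)\ge n\big(1-(1-\varepsilon)\mu\big)-o(n)$; subtracting the upper bound, the $\mu$‑terms cancel exactly and one is left with $\varepsilon n-o(n)>0$ for $n$ large — a contradiction. In case (ii) one has $\mu=1-o(1)$, so the upper bound is merely $o(n)$ while the lower bound is $n-b-o(n)\ge\varepsilon n-o(n)$, again a contradiction. Hence no such $v$ exists and Maker wins, proving (i) and (ii). The step I expect to be the real obstacle is the balancing in the regime $a=\Theta(\ln n)$, $b=\Theta(n)$: there the lower bound is only a constant fraction of $n$, so one cannot afford $ra$ of order $n$ (the naive $r\asymp\tfrac{n}{a+\ln n}$ fails) and must shrink $r$ by the extra factor $\omega$ while still keeping $\ln r=(1-o(1))\ln n$; the companion estimate $\tfrac{2b}{a}(k-1)=o(n)$, where the hypothesis $k=o(\ln n)$ (resp.\ $k=o(a)$) is consumed, is the other point requiring care.
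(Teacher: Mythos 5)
Your proof is correct and follows the same architecture as the paper's: assume a vertex stays at Maker-degree $\le k-1$, derive a lower bound $\ge n-b-o(n)$ on $\overline{\mathcal{D}}_B(A_s)=\mathcal{D}(v)$, and contradict it with an upper bound coming from Lemma~\ref{prop} for a suitable cutoff $r$. The differences are in three pieces of the accounting. First, you bound the round number by the generic edge count $t-1\le \binom{n}{2}/(a+b)\le n^2/(2b)$, whereas the paper proves the sharper $s-1<kn/a$ by observing that every Maker edge is incident to a vertex that, at the moment it is chosen, has Maker-degree at most $k-1$; your cruder bound inflates $\ln(s-1)$ only by $O(\ln(a+\ln n))+O(1)$, which is harmlessly absorbed by the $(1+o(1))$ factor. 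Second, the paper fixes $r=n/(a^2\ln n)$, a choice engineered so that the $\ln a$ contributions from $\ln(s-1)$ and $\ln r$ cancel exactly inside $\frac{b}{a}(2\ln(s-1)-\ln r+1)$, leaving the clean leading term $\frac{b}{a}\ln n$; your $r=\lceil n/(\omega(a+\ln n))\rceil$ with $\omega\to\infty$ slowly does not produce this cancellation, but you correctly observe that $ra\le n/\omega=o(n)$ and that in the regime $a=\omega(\ln n)$ the prefactor $b/a$ is small enough that the whole term is $o(n)$ regardless, so the extra $O(\ln n)$ inside the parentheses is tolerated. Third, for part (ii) the paper uses the $r$-free Lemma~\ref{propp}, whereas you reuse Lemma~\ref{prop}; both suffice since the upper bound only needs to be $o(n)$ against a lower bound of order $\varepsilon n$. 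You also discard the $-\frac{a-1}{2}\ln r$ term, which the paper carries along but does not actually need for the contradiction. Your bookkeeping via $\mu=a/(a+\ln n)$ and the exact cancellation of the $\mu$-terms leaving $\varepsilon n-o(n)$ is a nice streamlining; the paper instead isolates $b$ and compares it with $(1-\varepsilon)\frac{an}{a+\ln n}$ in each case. One small caveat: your hypothesis "one may assume $a=o(n)$" is needed to keep $r\ge 1$, and while it is indeed easy to treat $a=\Omega(n)$ directly, the paper avoids the issue automatically because $r=n/(a^2\ln n)\ge 1$ already follows from $a=o(\sqrt{n/\ln n})$ in part (i) and Lemma~\ref{propp} is $r$-free in part (ii).
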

\begin{proof}
Assume for a moment that Maker loses the game, say in round $s$. Then, there is a vertex $v_s$ with $d_{M}(v_s)\leq k-1$. Before round $s$, Breaker has occupied at least $n-1-d_M(v_s)-b\geq n-k-b$ edges incident in $v_s$, so $d_B(v_s)\geq n-k-b$  at this time. Since  $A_s=\{v_s\}$, so  $\overline{\mathcal{D}}_B(A_s)=\mathcal{D}(v_s)$.  By definition of $\mathcal{D}(v_s)$, we get $\overline{\mathcal{D}}_B(A_s)=\mathcal{D}(v_s)=d_B(v_s)-\frac{2b}{a} d_M(v_s)\geq n-k-b-\frac{2 b (k-1)}{a}$.
 Maker's strategy calls for easing $a$ dangerous vertices in every round. Before round $s$ he has claimed  $a(s-1)$ edges. We show  $a(s-1)<kn$:  let $A=\{v_{1}^{(1)}, \ldots, v_{1}^{(a)}, v_{2}^{(1)}, \ldots, v_{2}^{(a)}, \ldots, v_{s-1}^{(1)}, \ldots, v_{s-1}^{(a)}\}$ be the multiset of dangerous vertices until round $s-1$. 
Then each vertex of $A$ is eased at most $k$ times, since $v_{s}$ is a dangerous vertex with degree less then $k-1$ in Maker's graph. Until round $s-1$ maker claimed exactly $a(s-1)$ edges. All these edges are incident in the set $A$ according to Maker's strategy. As said the Maker degree of a vertex in $A$ is at most $k-1$, and we get $a(s-1)< |A|k\leq nk$.

\begin{enumerate}
\item[(i)] Let $r=\frac{n}{a^2\ln n}$ and $b=(1-\varepsilon)\frac{an}{a+\ln(n)}$. By assumption, $a=o\left(\sqrt{\frac{n}{\ln(n)}} \right)$ and $k=o(\ln(n))$. We distinguish between two cases.\\

\textbf{Case 1:}  $s> r$. By Lemma \ref{prop} , we have 
$$ n-k-b-\frac{2 b (k-1)}{a}\leq D(v_s)   \leq \frac{b}{a} \left(2\ln(s-1)-\ln(r)+1 \right) -\frac{(a-1)}{2} \ln\left(r \right)+ ra.$$
With $s - 1 < \frac{kn}{a}$ and $r=\frac{n}{a^2\ln n}$, we get

\begin{align*}
 n-k-b-\frac{2 b (k-1)}{a}  &< \frac{b}{a}\left(2\ln(n)+2\ln(k)-2\ln(a)+\ln(\ln n)-\ln(n) + 2\ln(a)+1 \right)\\&\,\,\,\, -\frac{(a-1)}{2} \ln\left(\frac{n}{a^2\ln n} \right)+ \frac{n}{a\ln n}\\
&= \frac{b}{a}\left(\ln(n)+2\ln(k)+\ln(\ln n)+1 \right)\\&\,\,\,\, -\frac{(a-1)}{2} \ln\left(\frac{n}{a^2\ln n} \right)+ \frac{n}{a\ln n}.
\end{align*}  
leading to, 
\begin{equation}\label{eq1}
n-k+\frac{(a-1)}{2} \ln\left(\frac{n}{a^2\ln n} \right)- \frac{n}{a\ln n} < \frac{b}{a}\left(\ln(n)+2\ln(k)+\ln(\ln n) +a+2k-1\right) 
\end{equation} 
thus
$$b> a\frac{n-k+\frac{(a-1)}{2} \ln\left(\frac{n}{a^2 \ln(n)} \right)-  \frac{n}{a \ln(n)}}{\ln(n)+a+2k-1+2\ln(k)+\ln(\ln(n))}, $$
With  $b= (1-\varepsilon)\frac{an}{a+\ln(n)}$ , inequality \ref{eq1} is equivalent to

$$(1-\varepsilon)\frac{an}{a+\ln(n)}> \frac{an}{\ln(n)+a}\left(\frac{1-\frac{k}{n}+\frac{(a-1)}{2n} \ln\left(\frac{n}{a^2 \ln(n)} \right)-  \frac{1}{a \ln(n)}}{1+\frac{2k-1+2\ln(k)+\ln(\ln(n))}{\ln(n)+a}}\right), $$
which is a contradiction for $n$ sufficiently large.\\
\textbf{Case 2:} $s \leq r$. With Lemma \ref{prop} , we obtain 
$$ n-k-b-\frac{2 b (k-1)}{a}\leq D(v_s)    \leq   \frac{b}{a}\left(1+\ln(r) \right)-\frac{(a-1)}{2} \ln(r)+r a, $$
with  $r=\frac{n}{a^2\ln n}$, we get
\begin{align*} n-k-b-\frac{2 b (k-1)}{a}   &\leq   \frac{b}{a}\left(\ln(n)-2\ln(a)-\ln(\ln(n))+1 \right)\\&\,\,\,\,-\frac{(a-1)}{2} \ln\left(\frac{n}{a^2 \ln(n)} \right)+ \frac{n}{a \ln(n)}.
 \end{align*}
 Isolating $b$ gives 
$$b> a\frac{n-k+\frac{(a-1)}{2} \ln\left(\frac{n}{a^2 \ln(n)} \right)-  \frac{n}{a \ln(n)}}{\ln(n)+a+2k-1-2\ln(a)-\ln(\ln(n))}, $$
as the lower bound on $b$ tends to $\frac{an}{a+\ln(n)}$ for $n \rightarrow \infty$. Thus we get a contradiction to  $b= (1-\varepsilon)\frac{an}{a+\ln(n)}$  for $n$ sufficiently large.
\item[(ii)] Set $b=(1-\varepsilon)n$. We assume that $a=\Omega\left(\sqrt{\frac{n}{\ln(n)}} \right)$ and $k=o(a)$. With Lemma \ref{propp}, we have
$$n-k-b-\frac{2 b (k-1)}{a}\leq D(v_s)    \leq \frac{2b}{a}\left(\ln(s-1)+1 \right), $$
and with $s - 1 < \frac{kn}{a}$ we obtain
\begin{align*}
n-k-b-\frac{2 b (k-1)}{a}  < \frac{2b}{a}\left(\ln(k)+\ln(n)-\ln(a)+1\right).
\end{align*}
Thus
\begin{equation}\label{eq2}
b> \frac{a(n-k)}{2\left(\ln(n)+k+\frac{a}{2}+\ln(k)-\ln(a)\right)}. \end{equation}

With $b= (1-\varepsilon)n$, inequality (\ref{eq2}) gives
\begin{align*}
(1-\varepsilon)n &>  n\left(\frac{a-\frac{k}{n}}{2\left(\ln(n)+k+\frac{a}{2}+\ln(k)-\ln(a)\right)}\right)\\
=& n\left(\frac{a-\frac{k}{n}}{\left(2\ln(n)+2k+a+2\ln(k)-2\ln(a)\right)}\right)\\
=& n\left(\frac{1-\frac{k}{an}}{1+\left(\frac{2\ln(n)+2k+2\ln(k)-2\ln(a)}{a}\right)}\right),
\end{align*}
this is a contradiction for $n$ sufficiently large.
\end{enumerate}

\end{proof}

\begin{theorem}{\label{main2}}
Let  $0<\delta<\varepsilon<1$. If
\begin{itemize}
\item[(i)]   $a = o\left(\sqrt{\frac{n}{\ln(n)}}\right), k=o(\ln(n))$ and $b=(1-\varepsilon)\frac{an}{a+\ln(n)}$,
\item[(ii)]   $a = \Omega\left(\sqrt{\frac{n}{\ln(n)}}\right)$, $k=o(a)$ and   $b= (1-\varepsilon)n$,
\end{itemize}
 then according to the strategy of Theorem \ref{main}  Maker can  in at most $\frac{k n}{a}$ rounds ensure that Maker's graph has minimum degree $k$ such that Breaker-degree of $v$ is at most $(1-\delta) n$ as long as its Maker degree is less than $k$.
\end{theorem}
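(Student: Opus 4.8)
The plan is to rerun the argument behind Theorem \ref{main}, but to derive the contradiction from a \emph{heavily attacked} dangerous vertex (one with $d_B(v)>(1-\delta)n$) instead of a near-saturated one, and to add a one-line counting argument for the round bound. For the round bound, observe that under the Min-Deg-Strategy a vertex $v$ is selected as a maximum-danger vertex $v_i^{(j)}$ at most $k$ times: each such selection raises $d_M(v)$ by one, and a vertex with $d_M(v)\ge k$ is not dangerous, hence never selected. Thus Maker makes at most $kn$ such selections in total, $a$ per round, so after at most $kn/a$ rounds the strategy has exhausted its selections; by Theorem \ref{main} it is never blocked at a saturated dangerous vertex, so at that point no dangerous vertex remains, i.e.\ Maker's graph has minimum degree $k$.

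For the degree bound, suppose towards a contradiction that at some moment of the game a vertex $v$ has $d_M(v)\le k-1$ and $d_B(v)>(1-\delta)n$. Only Breaker's moves raise $d_B$, so there is a round $s$ after whose Breaker move this first holds; at that moment $d_M(v)$ is still its value at the start of round $s$. If that value is $\ge k$, then $v$ reached Maker-degree $k$ before ever being attacked $(1-\delta)n$ times and the assertion holds for $v$; otherwise $d_M(v)\le k-1$, we set $v_s:=v$ and use the averaging machinery of Section \ref{MW}. Just before Breaker's move in round $s$, Breaker has claimed at most $b$ edges at $v_s$, so there $d_B(v_s)>(1-\delta)n-b$ while $d_M(v_s)\le k-1$ is unchanged, whence
\[
\overline{\mathcal{D}}_B(A_s)=\mathcal{D}(v_s)=d_B(v_s)-\tfrac{2b}{a}\,d_M(v_s)>(1-\delta)n-b-\tfrac{2b(k-1)}{a}.
\]
Moreover, since $v_s$ is dangerous at round $s$ it has been selected at most $k-1$ times, so counting as in the proof of Theorem \ref{main} gives $a(s-1)<kn$, i.e.\ $s-1<kn/a$; and since $b<(1-\delta)n$ the bad event forces $s\ge 2$, so the machinery indeed applies.

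It remains to combine this lower bound on $\overline{\mathcal{D}}_B(A_s)$, together with $s-1<kn/a$, with precisely the upper bounds used in the proof of Theorem \ref{main}: Lemma \ref{propp} in case (ii), and Lemma \ref{prop} with $r=\tfrac{n}{a^2\ln n}$ (splitting $s>r$ and $s\le r$) in case (i). The computation is literally the one in the proof of Theorem \ref{main} with the quantity $n-k$ replaced by $(1-\delta)n$: isolating $b$, substituting $b=(1-\varepsilon)\tfrac{an}{a+\ln n}$ resp.\ $b=(1-\varepsilon)n$, and using $k=o(\ln n)$ resp.\ $k=o(a)$ and $a=\Omega(\sqrt{n/\ln n})$, all error terms are $o(1)$ and one arrives at $(1-\delta)-o(1)<(1-\varepsilon)\bigl(1+o(1)\bigr)$, i.e.\ $\varepsilon-\delta<o(1)$, which is impossible for large $n$ since $0<\delta<\varepsilon<1$ are fixed. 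This contradiction yields the degree bound.

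The step requiring the most care — and essentially the only obstacle — is checking that the fixed gap $\varepsilon-\delta$ dominates every $o(1)$ error term across the three sub-cases, in particular that the extra ``$+a$'' which the $-b$ term contributes to the denominator merely rescales the right-hand side to $(1-\varepsilon)n(1+o(1))$; the remainder is a transcription of estimates already established for Theorem \ref{main}.
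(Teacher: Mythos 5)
Your proposal is correct and follows essentially the same route as the paper: replace the contradiction hypothesis of Theorem \ref{main} (a saturated dangerous vertex, giving $d_B(v_s)\geq n-k-b$ before round $s$) with a heavily attacked one (giving $d_B(v_s)>(1-\delta)n-b$), reuse the counting $a(s-1)<kn$ and the averaging lemmas unchanged since they depend only on the Min-Deg-Strategy, and close with the same case analysis via Lemmas \ref{propp} and \ref{prop}, exactly as the paper does, ending in a contradiction because $1-\varepsilon<1-\delta$. The only stylistic difference is that you spell out the round-counting and the ``first bad round'' reduction a bit more explicitly (your aside about $d_M(v)\geq k$ at the start of round $s$ is vacuous, since $B_s$ does not change $d_M$ and the hypothesis already gives $d_M(v)\leq k-1$ after $B_s$); otherwise the argument is the paper's.
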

\begin{proof}
Maker plays according to the Min-Deg-Strategy.
Suppose that Breaker wins in round $s$, so  after
$B_{s}$ there is a vertex $v_s$ such that $d_B(v_s)>(1-\delta) n$ and $d_M(v_s)\leq k-1$. So $$\overline{D}_B(A_s)=\mathcal{D}(v_s)=d_B(v_s)-\frac{2b}{a}d_M(v_s)> (1-\delta) n-b-\frac{2(k-1)b}{a}.$$
Because this solely affects Maker's strategy, the conclusions of Lemmas \ref{propp} and \ref{prop} remain valid. In this situation, we can proceed in the same way as in the proof of Theorem \ref{main}, with the exception that the definition of $v_s$ has been changed  and we use $d_B(v_s)>(1-\delta) n$.

\begin{enumerate}
\item[(i)] Let $r=\frac{n}{a^2\ln n}$ and $b=(1-\varepsilon)\frac{an}{a+\ln(n)}$. By the assumption, $a=o\left(\sqrt{\frac{n}{\ln(n)}} \right)$ and $k=o(\ln(n))$. We distinguish between two cases.\\
\textbf{Case 1:}  $s> r$. In the same way as in the proof of Theorem \ref{main} , Case 1 in  (i), we  have
$$(1-\delta) n-b-\frac{2(k-1)b}{a} \leq D(v_s)   \leq \frac{b}{a} \left(2\ln(s-1)-\ln(r)+1 \right) -\frac{(a-1)}{2} \ln\left(r \right)+ ra.$$
With $s - 1 < \frac{kn}{a}$ and $r=\frac{n}{a^2\ln n}$, we get
\begin{align*}
(1-\delta) n-b-\frac{2(k-1)b}{a}  &<  \frac{b}{a}\left(\ln(n)+2\ln(k)+\ln(\ln n)+1 \right)\\&\,\,\,\, -\frac{(a-1)}{2} \ln\left(\frac{n}{a^2\ln n} \right)+ \frac{n}{a\ln n}.
\end{align*}  
It follows that
$$b> a\frac{(1-\delta) n+\frac{(a-1)}{2} \ln\left(\frac{n}{a^2 \ln(n)} \right)-  \frac{n}{a \ln(n)}}{\ln(n)+a+2k-1+2\ln(k)+\ln(\ln(n))}, $$

For $b=(1-\varepsilon)\frac{an}{a+\ln(n)}$ and since $(1-\varepsilon)<(1-\delta)$, the last inequality cannot hold for sufficiently large $n$.\\
\textbf{Case 2:} $s \leq r$. We obtain as above
\begin{align*} (1-\delta) n-b-\frac{2(k-1)b}{a}   &\leq   \frac{b}{a}\left(\ln(n)-2\ln(a)-\ln(\ln(n))+1 \right)\\&\,\,\,\,-\frac{(a-1)}{2} \ln\left(\frac{n}{a^2 \ln(n)} \right)+ \frac{n}{a \ln(n)}.
 \end{align*}
 Isolating $b$ gives 
$$b> a\frac{(1-\delta) n+\frac{(a-1)}{2} \ln\left(\frac{n}{a^2 \ln(n)} \right)-  \frac{n}{a \ln(n)}}{\ln(n)+a+2k-1-2\ln(a)-\ln(\ln(n))}, $$

for sufficiently large $n$ this leading to  contradiction for $b=(1-\varepsilon)\frac{an}{a+\ln(n)}$ and using $(1-\varepsilon)<(1-\delta)$.
\item[(ii)] Set $b=(1-\varepsilon)n$. We assume that $a=\Omega\left(\sqrt{\frac{n}{\ln(n)}} \right)$ and $k=o(a)$. In the same way as in the proof of Theorem \ref{main} in  (ii), We have
$$ (1-\delta) n-b-\frac{2(k-1)b}{a}  \leq D(v_s)    \leq \frac{2b}{a}\left(\ln(s-1)+1 \right), $$
and with $s - 1 < \frac{kn}{a}$ we obtain
\begin{align*}
(1-\delta) n-b-\frac{2(k-1)b}{a}   \leq \frac{2b}{a}\left(\ln(k)+\ln(n)-\ln(a)+1\right).
\end{align*}
It follows that
$$b> \frac{a(1-\delta) n}{2\left(\ln(n)+k+\frac{a}{2}+\ln(k)-\ln(a))\right)} $$
which, for large $n$, is a contradiction for $b=(1-\varepsilon)n$ using $(1-\varepsilon)<(1-\delta)$.
\end{enumerate}
\end{proof}

\subsection{A Winning Strategy for Breaker \label{BW}}

In this section we present a   strategy that allows  Breaker to win the minimum-degree-$k$ game.  For previous results we refer to the work of Hefetz, Mikalacki and Stojakovi{\'c}, \cite{HefetzMS12} and   Chv{\'a}tal and Erd{\"o}s  \cite{chvatal1978biased}, already discussed in the introduction.\\ 

\textbf{ The $(a : b)$ Box Game}\\

The $(a:b)$ Box game  is played on  $k$ pairwise-disjoint sets $ A_{1},\ldots ,A_{k}$ of different sizes, with $\sum_{i=1}^k |A_i|=t$. The sets are often called "boxes" and the elements are called balls. Two players called  BoxMaker and  BoxBreaker, claim  $a$ respectively $b$ elements from the sets.   BoxMaker wins if he can pick all balls in at least one box, while the objective of the  BoxBreaker is to prevent BoxMaker to succeed. Breaker starts.  This game can be described also with a hypergraph $\mathcal{H}=(V,\mathcal{E})$, where $V=\cup_{i=1}^{k} A_i$, and $\mathcal{E}=\left\{A_1,\ldots,A_k  \right\}$. Let $t:=|V|$.  $A_1, A_2,\ldots, A_k$ are pairwise distinct, and the sum of their sizes is $\sum^k_{i=1} |A_i| = t$.   The board of the
Box Game is said to be a canonical hypergraph of type $(k, t)$ if  $||A_i|-|A_j || \leq 1$ holds for every $1 \leq i, j \leq k$. This game is denoted by $B(k, t, a, b)$.\\

The Box Game was introduced in 1978 by Chv{\'a}tal and Erd{\"o}s \cite{chvatal1978biased} for one bias i.e., $a$ arbitrary, but $b=1$. For the $(a:b)$ Box Game Hamidoune and Las Vergnas \cite{hamidoune1987solution} provided  sufficient and necessary conditions so that BoxMaker wins the game where BoxBreaker  is the first player. In 2012 Hefetz et al  \cite{HefetzMS12} gave a sufficient condition for the win of BoxMaker where  the
first player is BoxMaker.\\ A BoxMaker winning strategy is as follows: When Breaker pics a ball in a specific box, Maker has lost this box. In the remaining boxes, the Maker claims balls to ensure a balancing condition (i.e $\left| |A_i|-|A_j| \right|\leq 1$). Maker continues in this manner until he reasches a box that has less  then $a$ balls.\\    

We will use a sufficient condition for BoxMaker’s win for the game $B (k , t , a, b )$ proved by Hefetz, Mikalacki and Stojakovi{\'c} \cite{HefetzMS12}. 

 Given positive integers a and b, let $f$ be the function defined as follows:

$$
f(k ; a, b):=\left\{\begin{array}{cl}
(k-1)(a+1) & , \text { if } 1 \leq k \leq b \\
k a & , \text { if } b<k \leq 2 b \\
\left\lfloor\frac{k(f(k-b ; a, b)+a-b)}{k-b}\right\rfloor, & \text { otherwise }
\end{array}\right.
$$

\begin{lemma}{(\textbf{Lemma 6 in \cite{HefetzMS12}}) \label{lem1} } Let $a, b$ and $k$ be positive integers satisfying $k>b$ and $a-b-1 \geq 0$. Then
$$
f(k ; a, b) \geq k a-1+\frac{k(a-b-1)}{b} \sum_{j=2}^{ \lceil k / b]-1} \frac{1}{j} .
$$
\end{lemma}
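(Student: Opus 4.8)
The plan is to establish the lower bound on $f(k;a,b)$ by induction on $k$, unwinding the recursive definition. The base cases are handled by the middle branch of the definition: for $b < k \le 2b$ we have $f(k;a,b) = ka$, and since the claimed bound reduces to $ka - 1 + \frac{k(a-b-1)}{b}\sum_{j=2}^{\lceil k/b\rceil - 1}\frac{1}{j}$ where the sum is empty (because $\lceil k/b\rceil - 1 \le 1$ when $k \le 2b$), the right-hand side is $ka - 1 \le ka$, so the inequality holds trivially. For the inductive step, assume $k > 2b$ and that the bound holds for all smaller values exceeding $b$; in particular it holds for $k - b$, which still satisfies $k - b > b$.

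First I would write, from the recursive branch,
$$
f(k;a,b) = \left\lfloor \frac{k\bigl(f(k-b;a,b) + a - b\bigr)}{k-b} \right\rfloor \ge \frac{k\bigl(f(k-b;a,b) + a - b\bigr)}{k-b} - 1.
$$
Then I substitute the inductive hypothesis $f(k-b;a,b) \ge (k-b)a - 1 + \frac{(k-b)(a-b-1)}{b}\sum_{j=2}^{\lceil (k-b)/b\rceil - 1}\frac{1}{j}$ into the numerator. Expanding $\frac{k}{k-b}\bigl((k-b)a + (a-b-1) + \frac{(k-b)(a-b-1)}{b}\sum\bigr)$, the dominant term gives $ka$, the term $\frac{k(a-b-1)}{k-b}$ combines with the harmonic sum, and one checks that $\frac{k(a-b-1)}{k-b} + \frac{k(a-b-1)}{b}\sum_{j=2}^{\lceil(k-b)/b\rceil-1}\frac{1}{j} \ge \frac{k(a-b-1)}{b}\sum_{j=2}^{\lceil k/b\rceil - 1}\frac{1}{j}$. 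The point is that $\lceil k/b\rceil - 1 = \lceil (k-b)/b\rceil$, so the target sum has exactly one extra term, the $j = \lceil k/b\rceil - 1$ term, which equals $\frac{1}{\lceil k/b\rceil - 1} \le \frac{1}{(k-b)/b} = \frac{b}{k-b}$; multiplying by $\frac{k(a-b-1)}{b}$ bounds this extra term by $\frac{k(a-b-1)}{k-b}$, which is precisely the leftover term produced by the $a-b-1$ in the numerator. Tracking the two $-1$ contributions (one from the floor, one from the hypothesis scaled by $\frac{k}{k-b}$) one must also verify these do not spoil the bound; using $a - b - 1 \ge 0$ and $k > k - b$ the scaled $-\frac{k}{k-b}$ is absorbed since we only claim $-1$ on the right.

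The main obstacle will be bookkeeping the index arithmetic on the harmonic partial sums — precisely identifying that passing from $k-b$ to $k$ appends exactly the single term indexed $\lceil k/b\rceil - 1$, and that this term is small enough to be paid for by the surplus $\frac{k(a-b-1)}{k-b}$ coming out of the recursion. Care is also needed when $\lceil k/b \rceil$ is not an integer multiple of $b$ off from $k$, i.e. handling the ceiling correctly so the telescoping of sums is exact rather than off by one; I would treat $k \equiv 0 \pmod b$ and $k \not\equiv 0 \pmod b$ uniformly by noting $\lceil k/b\rceil = \lceil (k-b)/b\rceil + 1$ holds in all cases for $k > b$. Everything else is routine manipulation of the floor inequality and the inductive hypothesis.
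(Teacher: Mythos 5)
Your inductive proof is correct. The paper does not prove this lemma — it is imported verbatim as Lemma~6 of Hefetz, Mikalacki and Stojakovi\'c \cite{HefetzMS12} — so there is no in-paper proof to compare against, but your argument is sound and is the natural (and, as far as I can tell, the same) route taken in the cited source: verify the base case $b<k\le 2b$ (where $f(k;a,b)=ka$ and the harmonic sum is empty), then for $k>2b$ unwind the recursion, apply the floor bound $\lfloor x\rfloor\ge x-1$, substitute the hypothesis at $k-b$, and use $\lceil k/b\rceil=\lceil(k-b)/b\rceil+1$ so that the target sum gains exactly the single term $\tfrac{1}{\lceil k/b\rceil-1}$, which is paid for by the surplus $\tfrac{k(a-b-1)}{k-b}$ via $b(\lceil k/b\rceil-1)\ge k-b$ and the hypothesis $a-b-1\ge 0$. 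The one place your write-up is slightly loose is the sentence about the ``two $-1$ contributions'': the $-1$ inherited from the inductive hypothesis is best not treated in isolation (scaled, it would be $-\tfrac{k}{k-b}<-1$); rather, it should be combined with the $+(a-b)$ from the recursion to produce the single quantity $a-b-1$ before scaling, which is exactly what your displayed expansion does — so the conclusion is right, only that sentence's bookkeeping narrative is a little misleading.
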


\begin{lemma}{}{}{(\textbf{Lemma 7 in \cite{HefetzMS12}}) \label{lem2} }   If  $t \leq f(k ; a, b)+a$,  then BoxMaker has a winning strategy for  $B(k, t, a, b)$. 
\end{lemma}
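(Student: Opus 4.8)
The plan is to prove the statement by induction on the number of boxes $k$, mirroring the three clauses in the definition of $f$. Throughout I work in the variant where BoxMaker moves first (the convention of \cite{HefetzMS12} that is invoked here), and I identify a position with the multiset of sizes of the still-alive boxes, a box dying the instant BoxBreaker touches it. I use repeatedly the elementary fact that if BoxMaker removes elements one at a time, each from a currently largest alive box, the configuration stays canonical (sizes within $1$ of each other).

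Base cases. If $1\le k\le b$, then $t\le f(k;a,b)+a=k(a+1)-1$, so in a canonical hypergraph of type $(k,t)$ the smallest box has size $\lfloor t/k\rfloor\le a$; BoxMaker claims all of it on his first move and wins. If $b<k\le 2b$, then $t\le ka+a$; on his first move BoxMaker spends his $a$ tokens decrementing largest boxes, which keeps the hypergraph canonical and brings the total down to $t-a\le ka$, so every alive box now has at most $a$ elements. BoxBreaker then kills at most $b$ of the $k$ boxes, so at least $k-b\ge 1$ survive, each of size $\le a$, and BoxMaker finishes one of them on his next move.

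Inductive step, $k>2b$, assuming the statement for all smaller box counts. If some box already has $\le a$ elements BoxMaker wins at once; otherwise he again spends his $a$ tokens decrementing largest boxes, leaving a canonical configuration of $k$ boxes with total $t-a\le f(k;a,b)$ and box sizes in $\{s,s+1\}$ for $s=\lfloor (t-a)/k\rfloor$. The claim is that, against any BoxBreaker reply, BoxMaker can pass to a canonical subgame $B(k-b,T,a,b)$ with BoxMaker to move and $T\le f(k-b;a,b)+a$; the induction hypothesis then finishes the proof. Indeed, BoxBreaker destroys at most $b$ boxes, and by discarding surplus alive boxes (smallest first) BoxMaker may reduce to the case that exactly $b$ boxes are gone and, as the worst case, that these are the $b$ lightest ones, leaving a canonical family of $k-b$ boxes whose residual total is either $T=(t-a)-bs$ (when at least $b$ boxes have size $s$) or $T=(k-b)(s+1)$ (otherwise). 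A short computation using $t-a\le f(k;a,b)=\lfloor k(f(k-b;a,b)+a-b)/(k-b)\rfloor$ shows $T\le f(k-b;a,b)+a$ in both cases — in the first case one gets $T\le f(k-b;a,b)+a-b/k$, and in the second the bound $r\ge k-b+1$ on the number of size-$(s+1)$ boxes yields it after clearing denominators — which is exactly what the floor in the recursive clause of $f$ is calibrated to deliver.

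The step I expect to be the real work is this last one: verifying $T\le f(k-b;a,b)+a$ through the floor in the definition of $f$, carefully tracking the fractional parts of $t$ and of $(t-a)/k$, and treating the degenerate regimes (boxes of size $0$; BoxBreaker unable to place his $b$ tokens in $b$ distinct boxes; fewer than $b$ boxes actually destroyed). Tied to it is the justification of the two reductions in BoxBreaker's play — that we may assume he destroys exactly $b$ boxes and that they are the $b$ lightest — which rests on the monotonicity remark that destroying additional, or lighter, boxes never helps BoxBreaker, together with BoxMaker's freedom to discard alive boxes so as to re-enter a canonical subgame of the required type.
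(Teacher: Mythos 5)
The paper does not prove this lemma --- it is quoted verbatim as Lemma 7 of \cite{HefetzMS12}, an external citation --- so there is no in-paper proof to compare against; I can only assess your argument on its own terms. Your induction on $k$ following the three clauses of $f$ is sound and is, as far as I can tell, the natural argument the recursion was designed for. The base cases are correct: for $1\le k\le b$ the smallest box has size $\lfloor t/k\rfloor\le a$ since $t\le k(a+1)-1$; for $b<k\le 2b$, after BoxMaker's $a$ canonical decrements the total is $\le ka$ so every box has size $\le a$, and $k-b\ge 1$ survive BoxBreaker's reply. The key inductive inequality also checks out. Writing $u=t-a$, $s=\lfloor u/k\rfloor$, $r=u-ks\in\{0,\dots,k-1\}$ and $F=f(k-b;a,b)$, the hypothesis $u\le\lfloor k(F+a-b)/(k-b)\rfloor$ gives $u(k-b)/k\le F+a-b$; in your first subcase $T=(k-b)s+r=u(k-b)/k+br/k<(F+a-b)+b=F+a$, and in your second ($r\ge k-b+1$, all survivors of size $s+1$) $T=(k-b)(s+1)\le u(k-b)/k+(k-b)(b-1)/k<(F+a-b)+b=F+a$. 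Both are strict inequalities between integers, so $T\le F+a$ and the induction hypothesis applies to the canonical game $B(k-b,T,a,b)$ with BoxMaker to move.

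The one step you should promote from a remark to an actual lemma is the normal-form reduction you invoke: that BoxMaker may assume BoxBreaker kills exactly $b$ boxes and that they are the $b$ lightest. What this rests on is a monotonicity statement --- if BoxMaker (moving first) has a winning strategy on boxes of sizes $c_1\le\dots\le c_m$, he also wins on sizes $c'_1\le\dots\le c'_m$ with $c'_i\le c_i$ for all $i$, and he may voluntarily abandon alive boxes --- proved by a short strategy-copying argument. It is elementary, but it is precisely what licenses replacing BoxBreaker's arbitrary reply by the worst case you compute, and without it the passage from ``BoxBreaker destroys at most $b$ boxes'' to the two clean subcases is an assertion rather than a proof. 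Likewise, the degenerate regimes you flag (a box hitting size $0$ during BoxMaker's decrements, or $t<k$) should be dispatched explicitly at the top of each case with the observation that a size-$\le a$ box at the start of BoxMaker's move is an immediate win.
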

Here is our strategy for Breaker: 
Let $b=(1+\varepsilon) \frac{a n}{a+\ln ( n)}$,  $a=o(\sqrt{n})$ and $h:=\left \lceil \frac{ n}{2 \left(a+\ln (n)\right)}\right \rceil$ .  Breaker wins if and only if he is able to achieve $d_B(v)\geq n-k$  for some vertex $v$ in $K_{n}$.\\
He will operate in two steps. First, he claims all edges of a clique $C$ on at least $h$ vertices such that Maker cannot touch any vertex of $C$. In the second step,  he claims $n-k-h+1$ free edges incidents to some vertex $v$ in $C$. Thus, the degree of $v$ in the Maker's graph can't exceed $k$.  Let $|C|$ denote the number of vertices of $C$. \\
\textbf{Breaker's strategy:}\\
\textit{First step: Clique construction} \\
Set $C_1:=\emptyset$.\\
For $i=2,\cdots,h$ Breaker claims all edges of a clique $C_i$, as large as possible, such that the following conditions are satisfied.
 \begin{itemize}
 \item[$a)$]  $C_i$ is a clique in Breaker's graph such that $d_M(v)=0$ for every $v$ in $C_i$  just before Breaker's $i$th turn, so no Maker edge is incident in $v$. 
 \item[$b)$]  $|C_{i}|\geq |C_{i-1}|+1$.\\
 After clique $C_i$ has been built, Maker select $a$ edges.
 \end{itemize} 
 The construction stops if  a clique of cardinality at least $h$ has been built.\\
\textit{Second step: Playing the box game}\\ 
Let $C$ be the clique Breaker created in the first step, where $|V(C)| \geq h$ and $d_{M}(v)=0$ for every $v \in V(C)$. Just after constructing $C$, Maker can touch at most  $a$ vertices of $C$. Let $A$ be the set of these vertices.
\begin{itemize}
\item[c)] Set $V^*$ a subset of $C\backslash A$ of cardinality $h-a$.
\item[d)] For each node $v\in V^*$, let $E_{v}$ be the set of  edges incident in $v$ not taken by Maker and not belonging to $C$.
\item[e)] Define  the hypergraph $\mathcal{H}=(V,\mathcal{E})$ with $V=\displaystyle \bigcup_{v \in V^*} E_v$ and $\mathcal{E}:=\left\{E_v; v \in V^*\right\}$ as the instance of Box-Maker-Breaker game. So, the nodes of $\mathcal{H}$ are edges in $K_n$.  
\item[f)] Breaker plays first as a BoxMaker in the auxiliary box game $B(h-a, (h-a)(n-k-h+1), b, a)$.
\end{itemize}
\begin{lemma}\label{xx}
If Breaker playing as BoxMaker wins the box game $B(h-a, (h-a)(n-k-h+1), b, a)$, then he wins the minimum-degree-$k$ game.
\end{lemma}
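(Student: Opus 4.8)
The plan is to establish the reduction: if Breaker follows the two‑step strategy and, in the second step, wins the auxiliary game $B(h-a,(h-a)(n-k-h+1),b,a)$ as BoxMaker, then he forces $d_B(v)\ge n-k$ for some vertex $v$. Since at the end of the game such a $v$ is saturated and hence satisfies $d_M(v)=n-1-d_B(v)\le k-1$, this is precisely a Breaker win in the minimum-degree-$k$ game, as already observed above. So the whole content of the lemma is to translate a BoxMaker win back into the real board.

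First I would pin down what the first step produces: a clique $C$ in Breaker's graph with $|V(C)|\ge h$, no edge of Maker incident to any vertex of $C$, after which Maker's next move touches at most $a$ vertices of $C$; discarding those gives $V^*\subseteq V(C)\setminus A$ with $|V^*|=h-a$, and every $v\in V^*$ still has $d_M(v)=0$ and $d_B(v)\ge |V(C)|-1\ge h-1$. Then I would check that the sets $E_v$ ($v\in V^*$), the free edges at $v$ not lying in $C$, form a valid box-game board sitting inside the residual position. They are pairwise disjoint: an edge joining two vertices of $V^*\subseteq V(C)$ lies in $C$ and is excluded from every $E_v$, so each edge of $\bigcup_{v}E_v$ has exactly one endpoint in $V^*$. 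And each $E_v$ is large enough: $v$ carries no Maker edge, its $|V(C)|-1$ edges inside $C$ are all Breaker's, and — because the first step builds a clique of size essentially $h$ — the remaining $n-|V(C)|$ edges at $v$ leave at least $n-k-h+1$ free ones. Trimming every box down to exactly $n-k-h+1$ balls yields a canonical hypergraph of type $(h-a,(h-a)(n-k-h+1))$, on which Breaker, who moves first in the auxiliary game once $V^*$ is fixed, plays as BoxMaker with bias $b$ against Maker as BoxBreaker with bias $a$, i.e., exactly $B(h-a,(h-a)(n-k-h+1),b,a)$.

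Now I would run the reduction itself. Breaker plays the assumed BoxMaker winning strategy, always claiming in the real board the (still unclaimed) edge corresponding to the ball his strategy selects. Each move of Maker is read as a move of BoxBreaker: the edges Maker claims that happen to lie in some $E_v$ are the balls BoxBreaker takes, and the (at most $a$) remaining edges are ignored by the auxiliary game; equivalently, one may pad Maker's move with imaginary BoxBreaker claims up to $a$. Since this pessimistic padding can only hurt BoxMaker, his assumed winning strategy still wins, so after finitely many rounds Breaker owns every ball of some box $E_v$, none of which Maker ever touched. Back in the real board this means Breaker holds all $n-k-h+1$ edges of $E_v$ in addition to the $|V(C)|-1\ge h-1$ edges at $v$ inside $C$, whence $d_B(v)\ge(h-1)+(n-k-h+1)=n-k$, and Breaker has won the minimum-degree-$k$ game.

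The step I expect to be the real obstacle is making this embedding airtight — in particular, controlling the size of the clique $C$ built in the first step so that each box $E_v$ still contains $n-k-h+1$ free edges, and arguing that the mismatch between a true BoxBreaker (who claims $a$ balls every round) and Maker (who may waste part of a move on edges outside the $E_v$) never works against Breaker. The quantitative heart of the Breaker strategy, namely that a BoxMaker \emph{does} win $B(h-a,(h-a)(n-k-h+1),b,a)$ for $b=(1+\varepsilon)\frac{an}{a+\ln n}$ and $h=\big\lceil\frac{n}{2(a+\ln n)}\big\rceil$, is not part of this lemma and is handled separately via Lemmas \ref{lem1} and \ref{lem2}.
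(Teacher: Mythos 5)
Your proof is correct and follows the same approach as the paper: BoxMaker winning a box $E_v$ gives Breaker $n-k-h+1$ edges at $v$ outside the clique plus $h-1$ clique edges, so $d_B(v)\ge n-k$ and $d_M(v)\le k-1$. You are considerably more careful than the paper's one‑paragraph argument about why the $E_v$ form a legitimate disjoint box system and why Maker's real‑board moves can be mapped (with harmless padding) onto BoxBreaker moves — details the paper leaves implicit — but the underlying reduction and degree count are identical.
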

\begin{proof}
Consider an arbitrary vertex $v\in V$ and $v$ corresponds to a Box owend by BoxMaker. Then at least $n-k-h-1$ edges are incident in $v$ and do not belong to the clique $C$. Since $|C|=h$, the number of Breaker edges incident in $v$ is at least $n-k-h-1-h+1=n-k$. Thus Maker has claimed at most $n-1 -n+k=k-1$ edges incident in $v$, and Breaker wins. 
\end{proof}

We will show that with this strategy Breaker wins the minimum-degree-$k$ game. The proof will also show that all steps a),b) and f) can be implemented correctly.
\begin{theorem}{\label{Breaker 1}}
For every $\varepsilon > 0$  there is a $n_0(\varepsilon) \in \N$
such that for every $n \geq n_0(\varepsilon)$, Breaker wins the $(a: b)$  minimum-degree-$k$ game  if   $k=o(n)$, $a=o(\sqrt{n})$ and $b \geq (1+\varepsilon) \frac{a n}{a+\ln ( n)}$.
\end{theorem}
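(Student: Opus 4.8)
The plan is to verify that Breaker can execute the three-phase strategy described above and that the box-game bound of Lemma \ref{lem2} applies. I would organize the argument as follows.

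\emph{Step 1 (Clique construction is feasible).} First I would show that Breaker can indeed build, in the first step, a clique $C$ on at least $h=\lceil n/(2(a+\ln n))\rceil$ vertices all of whose vertices are Maker-untouched. The key point is a counting argument: when Breaker is about to build $C_i$ of size $|C_{i-1}|+1$, the total number of Maker edges claimed so far is at most $a(i-1)\le ah$, so at most $2ah$ vertices of $K_n$ carry a Maker edge; since $2ah=O(n/\ln n)=o(n)$ and $|C_{i-1}|+1\le h=o(n)$, there remain $n-o(n)$ Maker-free vertices, far more than enough to find $|C_{i-1}|+1$ of them spanning a clique that is still free in Breaker's graph — one must also check that the edges inside the candidate clique are unclaimed, but again the number of previously claimed edges is $O(nh/\ln n)+O(h^2)=o(n^2)$, negligible against $\binom{n}{2}$. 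Breaker claims the (at most $\binom{h}{2}$) internal edges over the course of these $h$ rounds, which is consistent with him claiming $b$ edges per round since $b\gg h$.

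\emph{Step 2 (Reduce to the box game).} Once $C$ with $|V(C)|\ge h$ and $d_M\equiv 0$ on $V(C)$ is built, Maker has had one extra move and so touches at most $a$ vertices of $C$; discard them to obtain $V^\ast\subseteq C\setminus A$ with $|V^\ast|=h-a$. For each $v\in V^\ast$, the box $E_v$ of free edges incident to $v$ outside $C$ has size at least $n-1-(h-1)-a\ge n-k-h+1$ once we also reserve room for the $k$ edges Maker is permitted; setting up $\mathcal H$ as in e)–f), Breaker plays as BoxMaker on $B(h-a,\,(h-a)(n-k-h+1),\,b,\,a)$. By Lemma \ref{xx}, a BoxMaker win here yields $d_B(v)\ge n-k$ for some $v$, i.e.\ Breaker wins the minimum-degree-$k$ game. (One has to note the boxes may have slightly unequal sizes and are not literally disjoint as edge-sets in $K_n$; I would argue that Breaker can restrict attention to a disjoint sub-family, or simply equalize the box sizes downward to $n-k-h+1$, which only helps BoxMaker.)

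\emph{Step 3 (BoxMaker wins the box game).} This is the computational heart. I apply Lemma \ref{lem2}: it suffices to show
\[
t:=(h-a)(n-k-h+1)\ \le\ f(h-a;\,b,\,a)+b,
\]
and for the right-hand side I use the lower bound of Lemma \ref{lem1} with $k\leftarrow h-a$, "$a$"$\leftarrow b$, "$b$"$\leftarrow a$ (the roles are swapped because Breaker is BoxMaker), provided $h-a>a$ and $b-a-1\ge 0$ — both hold for large $n$ since $b=(1+\varepsilon)an/(a+\ln n)\to\infty$ and $h-a\sim n/(2\ln n)\gg a=o(\sqrt n)$. Lemma \ref{lem1} gives
\[
f(h-a;b,a)\ \ge\ (h-a)b-1+\frac{(h-a)(b-a-1)}{a}\sum_{j=2}^{\lceil (h-a)/a\rceil-1}\frac1j,
\]
and the harmonic sum is $(1-o(1))\ln\!\big((h-a)/a\big)=(1-o(1))\ln(h/a)$. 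Dividing the target inequality by $(h-a)$, it becomes, up to $o(n)$ additive error, $n-h\ \le\ b+\frac{b}{a}(1-o(1))\ln(h/a)=\frac{b}{a}\big(a+(1-o(1))\ln(h/a)\big)$. Now $h\sim n/(2(a+\ln n))$, so $\ln(h/a)=\ln n-\ln(2a(a+\ln n))=(1-o(1))\ln n$ when $a=o(\sqrt n)$ (here $\ln a=o(\ln n)$ and $\ln(a+\ln n)=o(\ln n)$), whence $a+(1-o(1))\ln(h/a)=(1-o(1))(a+\ln n)$ and the needed inequality reads $n-h\le \frac{b}{a}(1-o(1))(a+\ln n)$, i.e.\ $b\ge(1+o(1))\frac{a(n-h)}{a+\ln n}=(1+o(1))\frac{an}{a+\ln n}$ since $h=o(n)$. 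This is exactly guaranteed by the hypothesis $b\ge(1+\varepsilon)\frac{an}{a+\ln n}$ for $n$ large, because the fixed constant $\varepsilon$ dominates all the $o(1)$ terms. Thus $t\le f(h-a;b,a)+b$, Lemma \ref{lem2} applies, and Breaker wins.

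\emph{Main obstacle.} The delicate part is Step 3: one must confirm that the larger clique size $h=\lceil n/(2(a+\ln n))\rceil$ — strictly bigger than the value $\lceil an/((a+1)\ln(an))\rceil$ used in \cite{HefetzMS12} — is still small enough that the harmonic-sum lower bound from Lemma \ref{lem1} produces a logarithmic factor of size $(1-o(1))\ln n$ rather than something smaller, while simultaneously being large enough that $n-h\le(1+o(1))\,\frac{b}{a}(a+\ln n)$ forces the threshold down to $(1+o(1))\frac{an}{a+\ln n}$. Balancing these two requirements is precisely why the factor $2$ appears in the denominator of $h$, and checking that the constant $\varepsilon>0$ absorbs every error term across the chain of inequalities (including the $O(\ln a)$, $O(\ln\ln n)$, and $O(h)$ losses) is the one place where care is genuinely needed; the feasibility arguments in Steps 1–2 are routine in comparison.
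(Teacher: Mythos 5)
Your proposal follows the paper's strategy step for step — incremental clique construction, reduction to the auxiliary box game $B(h-a,(h-a)(n-k-h+1),b,a)$, and the quantitative estimate via Lemma~\ref{lem1} and Lemma~\ref{lem2} — and Step 3 reaches the same final inequality as the paper. However, there is a genuine gap in your Step 1, and a minor slip in Step 3.

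In Step 1 you argue that at most $2ah$ vertices carry a Maker edge and claim $2ah=O(n/\ln n)=o(n)$, so that $n-o(n)$ Maker-free vertices remain and one can find $|C_{i-1}|+1$ of them spanning a Breaker-free clique. The estimate $2ah=o(n)$ fails once $a\gg\ln n$: with $h\sim n/(2(a+\ln n))$ one has $2ah\sim an/(a+\ln n)$, which is close to $n$ when, say, $a=n^{0.49}$, so almost every vertex could be Maker-touched and there is no room to pick $\Theta(h)$ fresh Maker-free vertices. More importantly, even for small $a$ the "find $|C_{i-1}|+1$ fresh Maker-free vertices spanning a clique" phrasing is not what Breaker can do: claiming all $\binom{|C_{i-1}|+1}{2}$ internal edges in a single round would exceed $b$ once $|C_i|=\Theta(h)$. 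The paper's argument is genuinely incremental and only needs $a+1$ new Maker-free vertices per round (which always exist, because $2a(h-1)+(h-1)+a+1<n$ follows from $a=o(\sqrt n)$). Breaker joins those $a+1$ vertices to the current clique $C_i$, spending only $\binom{a+1}{2}+(a+1)|C_i|<b$ edges, obtaining a Breaker-clique $C_i'$ of size $|C_i|+a+1$ all of whose vertices are Maker-free. Since every internal edge of $C_i'$ is Breaker's, each of Maker's $a$ responses touches at most one vertex of $C_i'$; hence at least one Maker-free vertex survives and the Maker-free subclique $C_{i+1}\subseteq C_i'$ has $|C_{i+1}|\ge|C_i|+1$. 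Your proposal omits this "add $a+1$, Maker kills at most $a$" mechanism, which is exactly what makes the clique grow while staying Maker-free; without it Step 1 does not close.

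In Step 3 you write that $\ln a=o(\ln n)$ follows from $a=o(\sqrt n)$. It does not (take $a=n^{0.49}$), so your intermediate claim $\ln(h/a)=(1-o(1))\ln n$ is false in that regime. The final conclusion $a+\ln\lceil(h-a)/a\rceil-1+\tfrac{a}{h-a}=(1-o(1))(a+\ln n)$ is still correct — when $a\gg\ln n$ the $a$-term dominates and $\ln(h/a)$ contributes only lower order — but the case distinction should be made explicit rather than derived from a wrong lemma. Steps 2 and the rest of Step 3 match the paper.
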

\begin{proof}
The first step consists of $h$ moves \textit{at most} and  $C_1:= \emptyset$. Suppose Breaker has built the cliques $C_1,\ldots,C_{i}$, $i\in \{1,\ldots,h\}$. According to Maker's move at the end of the first step, he  has claimed   at most $a$ edges incident in $C$. \\\\
\textit{Claim 1.} There are at least  $a+1$ vertices $u_1,\ldots,u_{a+1}$ which are neither in any of the cliques $C_i$ nor in Maker's graph.\\\\
\textit{Proof of Claim 1.}   Breaker has created $C_i$,  Maker's graph contains at most $2a(h-1)$ vertices and $|V(C_i)|\leq h-1$. Since $a=o(\sqrt{n})$,  $n+2 \left(a+1 \right)\left(a+\ln(n)\right)=o(2n\ln(n))$, for sufficiently large $n$ we have  $2an+n+2 \left(a+1 \right)\left(a+\ln(n)\right)<2an+2n\ln(n)$, and this is equivalent to $\frac{(2a+1)n}{2\left(a+\ln(n)\right)}+a+1<n$. With our assumption on $h$, $h-1<\frac{n}{2\left(a+\ln(n)\right)}$, and we obtain $2a(h-1)+(h-1)+a+1<n$. So there are at least  $a+1$ vertices  $u_1,\ldots,u_{a+1}$ which neither belong to the clique $C_i$ nor to the  graph of Maker, and Claim 1 is proved.\\

By the assumption $a=o(\sqrt{n})$ we have  $ a^2+a+a\ln(n)+\ln(n)=o(n)$, thus $\frac{a(a+1)}{2}=o\left( \frac{an}{a+\ln(n)} \right)$. Further we have $(a+1)(h-1)\leq \frac{(a+1)n}{2\left(a+\ln(n)\right)}  \leq \frac{an}{a+\ln(n)}$. 
 Since  $b=(1+\varepsilon) \frac{a n}{a+\ln ( n)}$,  we conclude that $b> \left(\begin{array}{c}a+1 \\ 2\end{array} \right)+\left(a+1\right)(h-1)$ for sufficiently large $n$. Hence by Claim 1 Breaker, in his $(i+1)$th turn, is able to claim  all $\left(\begin{array}{c}a+1 \\ 2\end{array} \right)$ edges on the clique on $u_1,\ldots,u_{a+1}$. Further, he can claim  \textit{at most} $\left(a+1\right)\left|C_{i}\right|$ edges  between $\{u_1,\ldots,u_{a+1}\}$ and $C_i$. Let $X$ the nodes of $C_i$ and $C'_i$ the clique  constructed on $X\cup\{u_1,\ldots,u_{a+1}\}$. 

The  remaining $b-\left(\begin{array}{c}a+1 \\ 2\end{array} \right)-\left(a+1\right)\left|C_{i}\right|$ can be claimed  by Breaker arbitrarily.  On the other hand Maker can  claim at most $a$ vertices of $C'_{i}$ in his $i$th turn. Hence there is  at least one vertex   left, say $w$,  with Maker degree $0$ and it can be used to built new clique $C_{i+1}$  with  $\left|C_{i}\right|+1 \leq \left|C_{i+1}\right| $ and $d_M(v)=0$ for every vertex $v$ in $C_{i+1}$.

In the second step Breaker starts with a clique of size at least $h$ and $d_M(v)=0$ for $v\in V(C)$. Maker can  touch at most $a$ vertices of $C$. Let $A$ be a subset of $V(C)$ of cardinality $a$ that contains these nodes.  The aim of the  Breaker is to claim at least $n-k-h+1$  edges incident to some vertex $v_0 \in V(C)\backslash A$. 
 
Then,  $d_M(v_0)\leq (n-1)-(h-1)-(n-k-h+1) <k$, and Maker loses the game. We define $V^*=C\backslash A$. Clearly $|V^*|=h-a$. For each node $v\in V^*$, the  number of   edges  incident in $v$ not taken by  Maker and not belonging to $C$ is  $n-h$. Let $E_v$ be a subset of these edges of cardinality  $n-k-h+1$ for each node $v\in V^*$.   The Box-Maker-Breaker hypergraph is $\mathcal{H}=(V,\mathcal{E})$ with $V=\cup_{v \in V^*} E_v$ and $\mathcal{E}=\left\{E_v; v \in V^*\right\}$. Breaker plays first as a BoxMaker in the auxiliary Box Game $B(h-a, (h-a)(n-k-h+1), b, a)$.
 
By Lemma \ref{xx}, if BoxMaker has a winning strategy for $B(h-a, (h-a)(n-k-h+1), b, a)$, then Breaker wins the $(a: b)$ minimum-degree-$k$ game on $K_{n}$.

To prove that BoxMaker has a winning strategy for $B(h-a, (h-a)(n-k-h+1), b, a)$,  it is sufficient to show that $(h-a)(n-k-h+1) \leq f((h-a) ; b,  a)+b$,  according to Lemma \ref{lem2}. Because $a=o(\sqrt{n})$, we have  $h-a>a$ and $b-a-1 \geq 0$ for sufficiently large $n$. Hence with   Lemma \ref{lem1}  we get

$$
f(h-a ; b,  a) \geq (h-a) b-1+\frac{(h-a)(b- a-1)}{ a}  \sum_{i=2}^{ \lceil (h-a) / a \rceil-1} \frac{1}{i}.
$$

It is known that for $j\geq 1$, the $j$ th harmonic number $H_{j}=\sum_{i=1}^{j} 1 / i$ satisfies

$$\ln(j+1) <\ln j+1 / 2<H_{j}<\ln j+2 / 3 \qquad  \text{for sufficiently large $j$}. $$

 Therefore,

$$f((h-a) ; b,  a) \geq (h-a) b-1+\frac{(h-a)(b-a-1)}{a }\left(\ln \left \lceil \frac{h-a}{ a}\right \rceil-1\right).      $$

So, it remains to prove

$$
(h-a)(n-k-h+1) \leq (h-a) b-1+\frac{(h-a)(b-a-1)}{a}\left(\ln \left \lceil \frac{h-a}{ a}\right \rceil-1\right)+b,
$$

which is equivalent to

$$
a(n-k-h+1) \leq a b-\frac{a}{h-a}+(b-a-1)\left(\ln \left \lceil \frac{h-a}{ a}\right \rceil-1\right)+b \frac{a}{h-a},
$$

and the last inequality is equivalent to

$$
b \geq \frac{ a\left(n-k-h+\ln \left \lceil\frac{h-a}{ a}\right \rceil\right)+\ln \left \lceil\frac{h-a}{a}\right \rceil-1+\frac{ a}{h-a}}{ a+\ln \left \lceil\frac{h-a}{a}\right \rceil-1+\frac{ a}{h-a}} .
$$

As the lower bound on $b$ is equivalent to $\frac{an}{a+\ln(n)}$ and $b=(1+\varepsilon)\frac{an}{a+\ln(n)}$, the above inequality holds for sufficiently large $n$.

\end{proof}

We note a trivial bound on $b$ for  Breaker's win.

\begin{theorem}{\label{Breaker 2}}
If $b\geq n-k$, then Breaker wins the $(a: b)$  minimum-degree-$k$ game.
\end{theorem}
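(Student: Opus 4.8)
The plan is to observe that this is the trivial degree bound: Breaker simply ignores the structure of the minimum-degree-$k$ game and plays greedily against a single vertex. First I would fix an arbitrary vertex $v \in V(K_n)$ at the start of the game. Since $v$ has exactly $n-1$ incident edges, and Breaker claims $b \geq n-k$ edges per round, he can claim all still-unclaimed edges incident to $v$ in his very first move (there are at most $n-1$ of them, but before Breaker's first move none have been claimed, so exactly $n-1$ are free; if $b \ge n-1$ he takes them all, and if $n-k \le b < n-1$ a short two-round argument handles the remainder). In fact it suffices to claim all but $k-1$ of the edges at $v$: once Breaker owns $n-k$ edges incident to $v$, at most $(n-1)-(n-k) = k-1$ edges at $v$ remain, so $d_M(v) \le k-1 < k$ forever after, and $v$ can never reach Maker-degree $k$.

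The cleanest way to make this airtight is a simple first-move accounting. Before Breaker's opening move (recall Breaker starts), all $n-1$ edges at $v$ are unclaimed. Breaker uses his $b \geq n-k$ picks to claim $n-k$ of these edges (he may have spare picks, which he wastes arbitrarily or uses elsewhere). After this single move $d_B(v) \geq n-k$, hence the number of edges incident to $v$ available to Maker for the rest of the game is at most $(n-1) - (n-k) = k-1$. Therefore Maker can never achieve $d_M(v) \geq k$, so Maker's graph never has minimum degree $k$, and Breaker wins. No further rounds of analysis are needed, and in particular we need no hypothesis on $a$ or on the relative sizes of $a$, $b$, $n$, $k$ beyond $b \ge n-k$ (and implicitly $k \le n-1$, which is forced for the game to be nontrivial).

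There is essentially no obstacle here: the statement is a sanity-check upper bound complementing Theorem \ref{Breaker 1}, showing that Breaker trivially wins once $b$ is as large as $n-k$, regardless of $a$. The only mild care needed is the edge case where $b$ is only slightly below $n-1$ (i.e. $n-k \le b \le n-2$), but since $n-k$ edges already suffice to cap $d_M(v)$ at $k-1$, and $b \ge n-k$ by hypothesis, one move always does the job. I would present it as a two-sentence proof: fix $v$, have Breaker claim $n-k$ edges at $v$ on his first move, and conclude $d_M(v) \le k-1$ permanently.

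\begin{proof}
Breaker fixes an arbitrary vertex $v \in V(K_n)$. Since Breaker moves first, all $n-1$ edges incident to $v$ are unclaimed before his opening move; as $b \geq n-k$, Breaker uses that move to claim $n-k$ of these edges (any remaining picks are used arbitrarily). Then $d_B(v) \geq n-k$, so at most $(n-1)-(n-k) = k-1$ edges incident to $v$ are ever available to Maker, whence $d_M(v) \leq k-1 < k$ for the rest of the game. Consequently Maker's graph never attains minimum degree $k$, and Breaker wins.
\end{proof}
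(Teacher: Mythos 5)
Your proof is correct and takes essentially the same approach as the paper: Breaker uses his first move to claim $n-k$ edges incident to a fixed vertex $v$, leaving at most $k-1$ edges at $v$ for Maker, so $d_M(v)$ is permanently capped below $k$. The paper's version is a terse two-liner (with a typo saying ``vertices'' for ``edges''); yours spells out the same accounting more carefully but adds nothing new.
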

\begin{proof}
In his first turn, Breaker claims $n-k$ vertices which are incident to some vertex $v$. It follows that Maker can't touch the vertex $v$ at least $k$ times. Hence  Breaker wins.
\end{proof}

We determine the asymptotic optimal generalized threshold bias for this game, a straightforward conclusion from Theorem \ref{main} , Theorem  \ref{Breaker 1} and Theorem  \ref{Breaker 2}:
\begin{theorem}{\label{opt}}
\begin{itemize}
\item[(i)] If $a=o\left( \sqrt{\frac{n}{\ln(n)}}\right)$ and $k=o(\ln(n))$, then   the asymptotic optimal  threshold bias for this game is $\frac{an}{a+\ln(n)}$.
\item[(ii)] If $a=\Omega\left( \sqrt{\frac{n}{\ln(n)}}\right)$ and $k=o(a)$, then   the asymptotic optimal  threshold bias for this game is $n$.
\end{itemize}

\end{theorem}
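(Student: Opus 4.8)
The plan is to derive Theorem~\ref{opt} by a straightforward sandwiching argument, combining the Maker strategy of Theorem~\ref{main} for the lower bounds with the Breaker strategies of Theorems~\ref{Breaker 1} and~\ref{Breaker 2} for the upper bounds; no new combinatorial input is needed. Note first that the threshold bias $b_0(a)$ is well-defined for this game because it is bias-monotone in Breaker's favour: if Breaker wins with bias $b$, he also wins with any bias $b' > b$, simply by ignoring $b' - b$ of his claimed edges in each round. Consequently it suffices to show that, for every fixed $\varepsilon > 0$ and all sufficiently large $n$, Maker wins at bias $(1-\varepsilon) B_n$ while Breaker wins at bias $(1+\varepsilon) B_n$, where $B_n = \frac{an}{a+\ln n}$ in case (i) and $B_n = n$ in case (ii). Letting $\varepsilon \downarrow 0$ then yields $b_0(a) = (1+o(1)) B_n$, which is exactly the assertion.

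For part (i), I would first observe that $a = o\!\left(\sqrt{n/\ln n}\right)$ implies $a = o(\sqrt n)$ and $k = o(\ln n)$ implies $k = o(n)$, so the hypotheses of both Theorem~\ref{main}(i) and Theorem~\ref{Breaker 1} are met for the relevant values of $a$, $b$ and $k$. Fixing $\varepsilon > 0$: Theorem~\ref{main}(i) provides a Maker win whenever $b \leq (1-\varepsilon)\frac{an}{a+\ln n}$ and $n$ is large, while Theorem~\ref{Breaker 1} provides a Breaker win whenever $b \geq (1+\varepsilon)\frac{an}{a+\ln n}$ and $n$ is large. Hence $(1-\varepsilon)\frac{an}{a+\ln n} \leq b_0(a) \leq (1+\varepsilon)\frac{an}{a+\ln n}$ for all large $n$, which gives the first claim.

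For part (ii), the hypotheses are $a = \Omega\!\left(\sqrt{n/\ln n}\right)$ and $k = o(a)$; in particular $k = o(n)$, so $n - k = (1-o(1))n$. Theorem~\ref{main}(ii) gives a Maker win whenever $b \leq (1-\varepsilon)n$ and $n$ is large, while the trivial Theorem~\ref{Breaker 2} gives a Breaker win the moment $b \geq n - k = (1-o(1))n$. Therefore $(1-\varepsilon)n \leq b_0(a) \leq n - k$ for all large $n$, again pinning $b_0(a)$ down to within a factor $(1 \pm o(1))$ of $n$.

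The only point requiring attention is purely clerical: checking that the asymptotic side-conditions of the three cited theorems hold simultaneously in each regime (which, as indicated above, they do) and that their lower- and upper-bound thresholds are expressed relative to the same normalisation $B_n$. I expect no genuine obstacle here --- the entire mathematical content of Theorem~\ref{opt} is already contained in Theorems~\ref{main}, \ref{Breaker 1} and~\ref{Breaker 2}.
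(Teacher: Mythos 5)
Your proposal is correct and matches the paper's approach exactly: the paper itself states Theorem~\ref{opt} is ``a straightforward conclusion from Theorem~\ref{main}, Theorem~\ref{Breaker 1} and Theorem~\ref{Breaker 2}'' without giving further detail, and your sandwiching argument is precisely that deduction. Your added remark on bias-monotonicity guaranteeing that $b_0(a)$ is well-defined is a sensible clarification that the paper leaves implicit.
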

Theorem \ref{Breaker 1} with $k=1$ immediately gives the following result for the Connectivity game

\begin{theorem}
\begin{description}
\item If $a=o\left( \sqrt{\frac{n}{\ln(n)}}\right)$, then   the asymptotic optimal generalized threshold bias for the Connectivity game game is $\frac{an}{a+\ln(n)}$.
\item If $a=\Omega\left( \sqrt{\frac{n}{\ln(n)}}\right)$, then  the asymptotic optimal generalized threshold bias for the Connectivity game game is $n$.
\end{description}
\end{theorem}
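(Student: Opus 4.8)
The plan is to obtain this statement as an immediate corollary of Theorem \ref{opt} specialized to $k=1$, supplemented by the observation that for $k=1$ the minimum-degree-$k$ game is essentially the connectivity game from Breaker's side, while Maker's construction of a spanning subgraph with minimum degree $1$ does not by itself yield connectivity—so the genuine content is only the Breaker direction, and the Maker direction must be imported from elsewhere. Concretely: the Breaker win in Theorem \ref{Breaker 1} with $k=1$ shows that Breaker wins the minimum-degree-$1$ game whenever $b \geq (1+\varepsilon)\frac{an}{a+\ln(n)}$ and $a=o(\sqrt{n})$; since isolating a vertex (forcing $d_M(v)=0<1$) in particular disconnects Maker's graph, the same strategy wins the connectivity game. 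For $a=\Omega(\sqrt{n/\ln n})$ one uses Theorem \ref{Breaker 2}: $b\geq n-1$ lets Breaker isolate a vertex in one move, hence disconnect Maker. This gives the upper bounds $\frac{an}{a+\ln(n)}$ and $n$ on the threshold bias.

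For the matching lower bounds—i.e. Maker wins the connectivity game when $b\leq(1-\varepsilon)\frac{an}{a+\ln(n)}$ (resp. $b\leq(1-\varepsilon)n$)—I would invoke Theorem \ref{main} with $k=2$ (or any fixed $k\geq 2$): by that theorem Maker can build a spanning subgraph of minimum degree $\geq 2$ in the stated ranges of $a,b$. A spanning subgraph of minimum degree $\geq 2$ need not be connected, so an extra argument is required. The cleanest route is to have Maker interleave the Min-Deg-Strategy with the known biased connectivity-building strategy of Gebauer–Szab\'o \cite{gebauer2009asymptotic} (for $a=1$) or its $(a:b)$-generalization in Hefetz–Mikalacki–Stojakovi\'c \cite{HefetzMS12}: those papers already show Maker wins the $(a:b)$ connectivity game for $b\leq(1-o(1))\frac{an}{a+\ln n}$ in the relevant regime when $a=o\!\left(\sqrt{n/\ln n}\right)$, and trivially Maker wins when $b=(1-\varepsilon)n$ and $a=\Omega\!\left(\sqrt{n/\ln n}\right)$ by a simple spanning-tree-building argument since he claims a constant fraction of all edges each round. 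Combining the Breaker upper bound with these Maker lower bounds pins down the asymptotic threshold bias at the claimed values.

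The steps, in order: (1) state that for $k=1$, Theorem \ref{Breaker 1} gives Breaker's win for $a=o(\sqrt{n})$ and $b\geq(1+\varepsilon)\frac{an}{a+\ln(n)}$, and note isolating a vertex disconnects Maker's graph; (2) for $a=\Omega\!\left(\sqrt{n/\ln n}\right)$, apply Theorem \ref{Breaker 2} (valid since $n-1\leq b$) to get Breaker's win for $b\geq(1+\varepsilon)n$ — here one checks $\Omega\!\left(\sqrt{n/\ln n}\right)$ is compatible with $a=o(\sqrt n)$ or argues directly; (3) cite the Maker connectivity strategies of \cite{gebauer2009asymptotic,HefetzMS12} for the matching lower bounds $b\leq(1-\varepsilon)\frac{an}{a+\ln(n)}$, and the trivial Maker spanning-tree argument for the $b\leq(1-\varepsilon)n$ case when $a$ is large; (4) conclude that $\frac{an}{a+\ln(n)}$ (resp. $n$) is the asymptotic threshold bias.

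The main obstacle I anticipate is precisely step (3): Theorem \ref{main} only delivers minimum degree, not connectivity, so the Maker lower bound for the connectivity game is not literally contained in this paper's results and must be borrowed. One must be careful that the imported Maker strategy covers the full range $a=o\!\left(\sqrt{n/\ln n}\right)$ with the sharp constant $\frac{an}{a+\ln(n)}$ rather than the weaker $\frac{an}{\ln(an)}$ appearing in \cite{HefetzMS12}; if it does not, then this corollary as stated is only fully justified on the Breaker side, and the Maker side needs either a reference to a strengthened connectivity strategy or an additional argument (e.g. augmenting the minimum-degree-$2$ graph Maker builds into a connected one using the $o(1)$ slack in $b$, along the lines of Gebauer–Szab\'o's two-stage approach). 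I would make this dependence explicit rather than claim it follows purely from Theorem \ref{Breaker 1}.
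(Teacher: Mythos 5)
You have correctly diagnosed the situation: the paper's entire justification for this theorem is the single sentence ``Theorem \ref{Breaker 1} with $k=1$ immediately gives the following result,'' and that only delivers the Breaker direction. Your step (1) and (2) reproduce that content faithfully: isolating a vertex, i.e.\ forcing $d_M(v)=0$, disconnects Maker's graph, so Theorem \ref{Breaker 1} (for $a=o(\sqrt{n})$, hence in particular for $a=o(\sqrt{n/\ln n})$) and Theorem \ref{Breaker 2} with $k=1$ yield the Breaker upper bounds on the threshold. You are also right that Theorem \ref{main} with $k=1$ or $k=2$ does not give the Maker lower bound, since a spanning subgraph of fixed minimum degree need not be connected; the paper simply does not address the Maker direction at this point, so your flag of a genuine gap in the presentation is justified.

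Where your proposed repair diverges from what the paper can actually supply is in the source for the Maker direction. You reach for \cite{gebauer2009asymptotic,HefetzMS12}; as you yourself anticipate, the Maker bound in \cite{HefetzMS12} is stated only for $a=c\ln n$ with $c$ a positive constant, where it does match $\frac{an}{a+\ln n}=\frac{cn}{c+1}$, but this does not visibly extend to the full ranges $a=o(\sqrt{n/\ln n})$ and $a=\Omega(\sqrt{n/\ln n})$ claimed in the theorem, nor does the edge-count heuristic you sketch for large $a$ by itself give a spanning tree. A cleaner and fully internal route is to invoke the Maker half of this paper's own Theorem \ref{HB}: Maker wins the $(a:b)$ Hamiltonicity game for $b\leq(1-\varepsilon)\frac{an}{a+\ln n}$ when $a=o(\sqrt{n/\ln n})$ and for $b\leq(1-\varepsilon)n$ when $a=\Omega(\sqrt{n/\ln n})$, and a Hamilton cycle is a connected spanning subgraph, so this immediately gives Maker's win in the connectivity game over the entire range with the correct constants. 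The forward reference to Theorem \ref{HB} creates no circularity, since the Hamiltonicity proof does not use the connectivity corollary. With that substitution your argument closes cleanly; without it, your step (3) carries exactly the dependence-on-external-sharpness risk you identified.
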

As discussed in the introduction, this resolves the open question of Hefetz et al \cite{HefetzMS12}.
\section{Hamiltonicity game \label{H} }

Due to Krivelevich \cite{krivelevich2011critical}, the critical bias for the (1:b) Hamiltonicity game on $K_n$ is asymptotically equal to   $\frac{n}{\ln(n)}$ which is the same for the (1:b) connectivity game. For  the  $(a:b)$  Hamiltonicity game we have:

\begin{theorem}{ \label{HB}}
\begin{itemize}
\item[(i)] Let $a=o\left( \sqrt{\frac{n}{\ln(n)}}\right)$. For every $\varepsilon > 0$  there is an $n_0(\varepsilon) \in \N$  such that for every $n \geq n_0(\varepsilon)$, Breaker wins the $(a:b)$ Hamiltonicity game if $b \geq \left(1+\varepsilon  \right) \frac{an}{a+\ln(n)}$ and Maker wins if $b \leq \left(1-\varepsilon \right) \frac{an}{a+\ln(n)}$.
\item[(ii)] Let $a=\Omega\left( \sqrt{\frac{n}{\ln(n)}}\right)$. For every $\varepsilon > 0$  there is an $n_0(\varepsilon) \in \N$ such that for every $n \geq n_0(\varepsilon)$, Breaker wins the $(a:b)$ Hamiltonicity game if $b \geq \left(1+\varepsilon   \right) n$ and Maker wins, if $b \leq \left(1-\varepsilon  \right) n$.
\end{itemize}

\end{theorem}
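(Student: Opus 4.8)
The plan is to treat the two directions of Theorem~\ref{HB} separately, and within each direction to split along the same dichotomy on $a$ as in the statement. The Maker direction is immediate: a Hamilton cycle is in particular a spanning subgraph with minimum degree $2$, so if $b\le(1-\varepsilon)\frac{an}{a+\ln n}$ (resp.\ $b\le(1-\varepsilon)n$), then by Theorem~\ref{main} applied with $k=2$ Maker can build a spanning subgraph of minimum degree $2$; but of course this does not yet give a Hamilton cycle, so here I would instead invoke the full strength of the min-deg strategy together with the boundedness of Breaker's degrees from Theorem~\ref{main2}, and follow Krivelevich's argument \cite{krivelevich2011critical}: Maker builds an expander-type graph which is known to be Hamiltonian. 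So the Maker side reduces to showing that the min-deg strategy (for a suitable $k=k(n)=o(\ln n)$, e.g.\ $k$ a slowly growing function) produces, within $O(n/a)$ rounds, a graph with the expansion property that forces Hamiltonicity; the key inputs are Theorem~\ref{main2}, which caps Breaker's degree at $(1-\delta)n$ on low-Maker-degree vertices, and the bound $s-1<kn/a$ on the number of rounds.

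For the Breaker direction, the strategy is the trivial one announced in the introduction: Breaker isolates a vertex, i.e.\ claims all $n-1$ edges at some vertex before Maker touches it, which certainly destroys every Hamilton cycle through that vertex and hence every Hamilton cycle. But ``isolate a vertex'' is exactly the minimum-degree-$1$ game (a vertex of Maker-degree $0$ is isolated), so Breaker's win in the $(a:b)$ minimum-degree-$k$ game for $k=1$ immediately yields a win in the Hamiltonicity game. Thus case~(i) follows from Theorem~\ref{Breaker 1} with $k=1$: for $a=o(\sqrt{n})$ (which is implied by $a=o(\sqrt{n/\ln n})$) and $b\ge(1+\varepsilon)\frac{an}{a+\ln n}$, Breaker wins the min-deg-$1$ game, hence makes some vertex have Maker-degree $\le 0$, hence prevents Hamiltonicity. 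Case~(ii) follows the same way from Theorem~\ref{Breaker 2}: if $b\ge(1+\varepsilon)n\ge n-1$, Breaker claims all $n-1$ edges at a vertex in his first move.

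The main obstacle is the Maker side in the regime $a=\Omega(\sqrt{n/\ln n})$, $b\le(1-\varepsilon)n$. Here $a$ is large, so Maker plays very fast, and one must check that Krivelevich's expander construction can be carried out by the min-deg strategy under this larger bias within the allotted number of rounds. The paper flags this: ``due to $a>1$ the number of rounds for constructing an appropriate expander graph is at most $\frac{16n}{a}$'', so the argument is a quantitative adaptation of \cite{krivelevich2011critical} in which every occurrence of ``$n$ rounds'' is replaced by ``$n/a$ rounds'' and every degree/expansion estimate is rescaled accordingly; the delicate point is that the relevant bad events (a set $S$ of size $\le n/\ln n$ expanding poorly, or a pair of small sets with no Maker-edge between them) must be controlled simultaneously for all such sets, using that Maker claims $a$ edges per round and that Theorem~\ref{main2} already guarantees minimum degree $k$ (for $k$ a suitably chosen slowly growing function, e.g.\ $k=o(a)$ in case~(ii) and $k=o(\ln n)$ in case~(i)) with Breaker-degrees bounded by $(1-\delta)n$ on the not-yet-saturated vertices. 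I would therefore organize the Maker proof as: (1) fix $\delta$ and a growth rate for $k$; (2) run the min-deg strategy of Theorem~\ref{main2}, obtaining in $\le kn/a$ rounds a graph $G_M$ of minimum degree $k$ with the stated Breaker-degree cap; (3) show that the min-deg strategy, possibly augmented with extra edge choices once minimum degree $k$ is reached, yields the expansion property $|N_{G_M}(S)|\ge 2|S|$ for all $|S|\le n/(\text{something}\cdot\ln n)$ and connectivity between small sets, via a union bound over rounds exploiting the $a$-fold speed-up; (4) conclude Hamiltonicity by the Pósa-rotation / booster argument exactly as in \cite{krivelevich2011critical}. The bookkeeping in step~(3) is the real work; everything else is either a direct citation of an earlier theorem in this paper or a verbatim transcription of Krivelevich's lemmas with $n$ replaced by $n/a$.
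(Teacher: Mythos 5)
Your Breaker direction is exactly the paper's: Breaker plays for minimum-degree-$1$ (i.e.\ isolates a vertex), and the two halves of the claim follow from Theorem~\ref{Breaker 1} and Theorem~\ref{Breaker 2} with $k=1$; no issues there.

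On the Maker side, your three-stage outline (expander $\rightarrow$ connected expander $\rightarrow$ boosters) is the same high-level plan the paper carries out, following Krivelevich \cite{krivelevich2011critical}, and you correctly identify Theorem~\ref{main2} (the $(1-\delta)n$ cap on Breaker-degree at unsaturated vertices) and the $O(n/a)$ round count as the key quantitative inputs. But there is a genuine gap in step~(3): you propose to ``run the min-deg strategy of Theorem~\ref{main2}\ldots via a union bound over rounds,'' yet the min-deg strategy as stated lets Maker pick an \emph{arbitrary} unclaimed edge at the chosen dangerous vertex. Against a deterministic Maker an adversarial Breaker could steer those arbitrary choices into a non-expanding configuration, and there is no probability space for a union bound to live on. The paper's actual move is to replace ``arbitrary'' with ``uniformly random'': in Stage~I Maker picks a vertex $v$ of Maker-degree $<16$ with maximum danger and then claims a \emph{uniformly random} unclaimed edge at $v$. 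It is this randomization (justified because Theorem~\ref{main2} guarantees at least $\delta n-16$ unclaimed edges at every such vertex) that makes the per-round escape probability $\le (|A\cup B|-1)/(\delta n-16)$ and lets the union bound over sets $A,B$ close. Your phrase ``possibly augmented with extra edge choices once minimum degree $k$ is reached'' does not capture this; the randomization is needed from the very first round, not after minimum degree is attained. A secondary, smaller discrepancy: you suggest taking $k$ to be a slowly growing function, while the paper fixes $k=16$. The constant $16$ is not arbitrary -- it is tuned so that the expansion contradiction ($|A|\le 5$ forces $|B|\ge 12>2|A|-1$) and the double-counting bound ($8i$ Maker edges at $A$, hence $4i$ of one of two types) go through cleanly; a slowly growing $k$ would also work but changes the round bound from $16n/a$ to $kn/a$, which then has to be rethreaded through the edge-count estimates in Stages~II and~III.
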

Note that Breaker's win as stated in Theorem \ref{HB} follows from Theorem \ref{Breaker 1} and Theorem \ref{Breaker 2} applied to $k=1$

We present a winning strategy for Maker. Its analysis relies on our Theorem \ref{main2} leading to the proof of Theorem \ref{HB}.

We need some results and  techniques from of Hefetz, Krivelevich,
Stojakovi{\'c} and Szabo \cite{hefetz2014positional}, who provided the winning strategy for Maker in the (1:b) Hamiltonicity game, for $b \leq \left(1-o(1)\right) \frac{ n}{\ln n}$.\\

To begin,  for a graph $G = (V,E)$ and a 
subset $U \subset V$ we denote by $N_G(U)$ the external neighborhood of $U$ in G, i.e. $N_G(U) =\left \{v \in V \backslash U : v \text{ has a neighbor in } U \right\}$.\\

\begin{definition}{(Definition 6.3.1. in \cite{hefetz2014positional})}
For a positive integer $k$, a graph $G=(V, E)$ is a $(k, 2)$-expander, or simply a $k$-expander if $\left|N_{G}(U)\right| \geq 2|U|$ for every subset $U \subset V$ of at most $k$ vertices.

\end{definition}

\begin{lemma}{(Lemma 6.3.2. in \cite{hefetz2014positional}) \label{comp}}
 Let $G=(V, E)$ be a $k$-expander. Then every connected component of $G$ has size at least $3k$.
\end{lemma}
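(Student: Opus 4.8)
The plan is to argue by contradiction: assume that $G$ is a $k$-expander possessing a connected component $C$ with $|C| \le 3k-1$, and derive a violation of the expansion condition $|N_G(U)| \ge 2|U|$ for $|U| \le k$. The argument splits naturally according to the size of $C$ relative to $k$.

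First I would dispose of the case $|C| \le k$. Here one applies the expansion property directly to $U = C$, which is admissible since $|U| \le k$. Because $C$ is a connected component of $G$, no edge of $G$ leaves $C$, hence $N_G(C) = \emptyset$. On the other hand the expander condition gives $|N_G(C)| \ge 2|C| \ge 2 > 0$, since a component is non-empty. This contradiction forces $|C| \ge k+1$.

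Next I would treat the remaining range $k+1 \le |C| \le 3k-1$. Since $|C| \ge k+1 > k \ge 1$, we may choose a subset $U \subseteq C$ with exactly $|U| = k$. Every neighbor in $G$ of a vertex of $U$ again lies in $C$ (as $C$ is a component), and by definition $N_G(U)$ is disjoint from $U$; therefore $N_G(U) \subseteq C \setminus U$, whence $|N_G(U)| \le |C| - k \le (3k-1) - k = 2k - 1 < 2k = 2|U|$. This again contradicts the $k$-expander property. Combining the two cases, no connected component can have fewer than $3k$ vertices, proving the lemma.

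There is no serious obstacle here; the argument is elementary. The only points requiring a little care are: making sure the external neighborhood is taken to exclude $U$ itself (so that the bound $|N_G(U)| \le |C|-|U|$ is valid rather than just $|N_G(U)| \le |C|$), and noting that $k \ge 1$ guarantees the existence of a $k$-element subset in the second case. Everything else is a direct application of the definition of a $(k,2)$-expander.
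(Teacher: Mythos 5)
Your proof is correct and is the standard argument for this fact; the paper cites Lemma 6.3.2 of Hefetz, Krivelevich, Stojakovi\'c and Szab\'o without reproducing a proof, and the argument given there is essentially the one you wrote (split on whether the component has at most $k$ vertices or between $k+1$ and $3k-1$ vertices, and contradict the expansion bound in each case). Both cases are handled correctly, including the observation that $N_G(U)$ excludes $U$ and stays inside the component.
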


\begin{definition}{(Definition 6.3.5. in \cite{hefetz2014positional})}
 Given a graph $G$, a non-edge $e=(u, v)$ of $G$ is called a booster if adding $e$ to $G$ creates a graph $G^{\prime}$, which is Hamiltonian or whose maximum path is longer than that of $G$.

\end{definition}

\begin{lemma}{(Corollary 6.3.6. in \cite{hefetz2014positional}) \label{boost}}
 Let $G$ be a connected non-Hamiltonian $k$-expander. Then $G$ has at least $\frac{(k+1)^{2}}{2}$ boosters.
\end{lemma}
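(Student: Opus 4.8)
The plan is to prove this by P\'osa's rotation-extension technique, the standard tool behind such booster estimates. Since $G$ is finite, fix a longest path $P = v_0 v_1 \cdots v_\ell$ in $G$; as $G$ is a $k$-expander with $k \ge 1$ it has minimum degree at least $2$, so $P$ is nontrivial. The idea is to produce, by rotating $P$ at both ends, a large family of pairs of vertices $\{u,v\}$ such that $G$ contains a longest path with endpoints $u$ and $v$; each such pair will turn out to be a non-edge whose addition to $G$ either lengthens the longest path or creates a Hamilton cycle, i.e.\ a booster.

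First I would fix the endpoint $a := v_0$ and let $R$ be the set of all vertices that occur as the free endpoint of some longest path obtained from $P$ by a sequence of elementary rotations keeping $a$ fixed. P\'osa's lemma asserts that every vertex of $N_G(R)$ lies either in $R$ or is a path-neighbour of a vertex of $R$ along the relevant path, which yields $|N_G(R)| < 2|R|$. Since $G$ is a $k$-expander, $|R| \le k$ would force $|N_G(R)| \ge 2|R|$, a contradiction; hence $|R| \ge k+1$. Next, for each $v \in R$ choose a longest path $P_v$ from $a$ to $v$, now fix the endpoint $v$ and rotate $P_v$ at its other end, obtaining analogously a set $R_v$ of possible free endpoints with $|R_v| \ge k+1$; by construction $v \notin R_v$, and for every $u \in R_v$ there is a longest path of $G$ with endpoints $u$ and $v$.

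Then I would show that each pair $\{u,v\}$ with $v \in R$ and $u \in R_v$ is a booster. Let $Q$ be a longest path of $G$ with endpoints $u,v$, so $|V(Q)| = |V(P)|$. If $uv \notin E(G)$, then $Q$ together with $uv$ is a cycle $C$ on $V(Q)$: if $V(C) = V(G)$ then $G+uv$ is Hamiltonian, and otherwise connectivity of $G$ gives a vertex outside $C$ with a neighbour on $C$, producing a path in $G+uv$ on $|V(Q)|+1$ vertices, longer than $P$ — in either case $uv$ is a booster. If instead $uv \in E(G)$, the same cycle $C$ lies in $G$ itself: if spanning it contradicts non-Hamiltonicity, and if not, connectivity again yields a path in $G$ longer than $P$, contradicting maximality of $P$; so this case cannot occur, and every such $\{u,v\}$ is a non-edge that is a booster. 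Finally I would count: there are at least $(k+1)^2$ ordered pairs $(v,u)$ with $v \in R$ and $u \in R_v$, and an unordered booster $\{u,v\}$ is counted at most twice (as $(v,u)$ and as $(u,v)$), so $G$ has at least $\frac{(k+1)^2}{2}$ boosters.

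The hard part will be the structural input, namely P\'osa's lemma: the claim that $N_G(R) \subseteq R \cup R^- \cup R^+$, where $R^-$ and $R^+$ are the immediate predecessors and successors of vertices of $R$ along the path, giving $|N_G(R)| < 2|R|$. This requires a careful analysis of which vertices can acquire a neighbour outside $R$ under rotations, and it is where maximality of the longest path is used most delicately. A secondary point of care is the double-counting bookkeeping in the final step, including the verification that $u \ne v$ throughout, which holds because a booster pair must be a non-edge.
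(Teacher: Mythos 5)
Your proof is correct and follows the standard P\'osa rotation--extension argument: expansion forces $|R|\geq k+1$ at each end, the connectivity/non-Hamiltonicity case analysis shows every pair $\{u,v\}$ so produced is a non-edge and a booster, and the factor $\tfrac12$ accounts for unordered pairs. The paper does not prove this lemma itself --- it imports it verbatim as Corollary 6.3.6 of Hefetz, Krivelevich, Stojakovi\'c and Szab\'o, whose proof is essentially the one you give.
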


 Let $a=o\left( \sqrt{\frac{n}{\ln(n)}}\right)$ and   $b= \left(1-\varepsilon\right) \frac{an}{a+\ln(n)}$ (respectively  $a=\Omega\left( \sqrt{\frac{n}{\ln(n)}}\right)$ and   $b=\left(1-\varepsilon \right) n$). We suppose that  $\delta \in \left]0,\min\left(\varepsilon,\frac{2^2}{e^7 10^4} \right) \right[$ and  $k=16$ in  Theorem \ref{main2}.\\ 
 
Let $k_0=\delta^5 n$. There are three stages in the Maker's strategy. Maker constructs a $k_0$-expander in at most $\frac{16 n}{a}$ moves in the first stage. Secondly, he ensures that his graph is connected in at most $O(n/ak_0)$ movements. Finally, he converts his graph to a Hamiltonian graph by making at most $\frac{n}{a}$ additional steps.\\

\textbf{Stage I – Creating an Expander.}\\
\textbf{Maker's Strategy:}  As long as there is a vertex of degree less than $16$ in Maker’s graph, Maker
chooses a vertex $v$ of degree less than $16$ in his graph with the largest danger value $\mathcal{D}(v)$ (breaking ties arbitrarily) and claims a random unclaimed edge $e$ incident to $v$ with respect to the uniform distribution.

\begin{lemma}{}
Maker has a random strategy to create a $k_0$-expander in at most $\frac{16 n}{a}$ moves, almost surely. 
\end{lemma}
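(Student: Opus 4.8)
The plan is to derive the expansion property as a consequence of the degree control provided by Theorem~\ref{main2}, combined with a union bound that exploits the randomness in Maker's choice of the far endpoint of each edge.

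First I would invoke Theorem~\ref{main2} with $k=16$; this is admissible since $16=o(\ln n)$ in regime~(i) and $16=o(a)$ in regime~(ii), and since $\delta<\varepsilon$. It gives that after at most $\tfrac{16n}{a}$ moves Maker's graph $G_M$ has minimum degree at least $16$, and, crucially, that at every point of the game any vertex $v$ with $d_M(v)<16$ satisfies $d_B(v)\le(1-\delta)n$. Hence, whenever Maker is about to play at such a vertex $v$, the number of unclaimed edges at $v$ is at least $n-1-(1-\delta)n-15\ge\tfrac{\delta n}{2}$ for $n$ large, so the far endpoint Maker selects is, conditionally on the entire history so far, uniform over a set of size at least $\tfrac{\delta n}{2}$. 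In particular the strategy is well defined and terminates within $\tfrac{16n}{a}$ moves, and it remains to show that $G_M$ is then a $k_0$-expander with probability $1-o(1)$.

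Next I would analyse the failure event. If $G_M$ is not a $k_0$-expander there is a set $U$ with $|U|=u\le k_0$ and $|N_{G_M}(U)|<2u$; write $W=U\cup N_{G_M}(U)$, so $|W|<3u$. Since every $G_M$-neighbour of a vertex of $U$ lies in $W$ and $d_M\ge 16$ everywhere, at least $\tfrac{16u}{2}=8u$ Maker-edges lie entirely inside $W$. To bound this for a fixed candidate pair $(U,B)$ with $|U|=u$ and $|B|<2u$ (put $W:=U\cup B$): only Maker's edge-steps that select a vertex of $W$ can create an edge inside $W$; each vertex of $W$ is played on at most $16$ times (Maker only plays at vertices of current degree $<16$), so there are at most $16|W|<48u$ such steps; and by the previous paragraph each of them lands inside $W$ with conditional probability at most $|W|/(\delta n/2)<\tfrac{6u}{\delta n}$. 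Hence the number of Maker-edges inside $W$ is stochastically dominated by $\mathrm{Bin}\big(48u,\tfrac{6u}{\delta n}\big)$, so
\[
\Pr\left[\text{at least }8u\text{ Maker-edges lie inside }W\right]\le\binom{48u}{8u}\left(\frac{6u}{\delta n}\right)^{8u}\le\left(\frac{36eu}{\delta n}\right)^{8u}.
\]
Summing over all candidate pairs and using $\binom{n}{u}\binom{n}{2u}\le(en/u)^{3u}$ together with $\sum_{b'<2u}\binom{n}{b'}\le 2u\binom{n}{2u}$,
\[
\Pr\left[G_M\text{ is not a }k_0\text{-expander}\right]\le\sum_{u=1}^{k_0}2u\left[\,36^{8}e^{11}\delta^{-8}\Big(\tfrac{u}{n}\Big)^{5}\,\right]^{u}.
\]
Because $u\le k_0=\delta^5 n$, the bracketed quantity is at most $36^{8}e^{11}\delta^{17}$, which is $<1$ by the hypothesis $\delta<\tfrac{2^2}{e^{7}10^{4}}$; splitting the sum at $u=\log^2 n$ (for $u\le\log^2 n$ the bracket is $o(1)$, for $u>\log^2 n$ one gets the tail of a convergent geometric-type series whose index runs to infinity) shows the whole sum is $o(1)$. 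Therefore $G_M$ is a $k_0$-expander almost surely, completed within $\tfrac{16n}{a}$ moves.

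The main obstacle is the last estimate: one must ensure that the roughly $n^{3u}$ sets which could witness a violation of $k_0$-expansion are overwhelmed by the probability $\approx(u/n)^{8u}$ that $8u$ essentially independent random edges all land in a set of size $3u$. The surplus exponent $8-3=5$ comes exactly from the degree threshold $k=16$ (each of the $u$ offending vertices forces $8$ rather than $3$ incidences inside $W$ on average), and the choices $k_0=\delta^5 n$ and $\delta$ small are what convert this exponent gap into a base strictly below $1$. A minor point to verify along the way is that the conditional uniformity of the far endpoint over $\ge\tfrac{\delta n}{2}$ vertices — precisely what Theorem~\ref{main2} supplies — does legitimately yield stochastic domination by a binomial despite the adaptivity of Maker's vertex choices.
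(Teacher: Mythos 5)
Your proposal is correct and rests on the same pillars as the paper's proof: invoke Theorem~\ref{main2} with $k=16$ to ensure the game lasts at most $\tfrac{16n}{a}$ moves while every still-deficient vertex has $\geq\delta n -16$ unclaimed incident edges, observe that a violating set $U$ forces at least $8u$ Maker-edges inside $W=U\cup N_{G_M}(U)$ (with $|W|<3u$), bound the probability of this via conditional uniformity of the random far endpoint, and union-bound over candidate $(U,B)$ pairs with $k_0=\delta^5n$ chosen so the resulting base is $<1$. The one genuine difference is in the middle step: the paper splits into two cases depending on whether at least $4i$ of those edges were created by Maker playing at a vertex of $A$ (and landing in $A\cup B$) or by playing at a vertex of $B$ (and landing in $A$), bounding each case separately. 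You instead bound all plays at $W$ uniformly — there are at most $16|W|<48u$ of them, each landing in $W$ with conditional probability $<\tfrac{6u}{\delta n}$ — and apply a single binomial domination to get $\binom{48u}{8u}\big(\tfrac{6u}{\delta n}\big)^{8u}\le\big(\tfrac{36eu}{\delta n}\big)^{8u}$. This is cleaner: it avoids the case distinction, yields a higher exponent ($8u$ rather than $4i$) which gives a comfortable margin in the union bound, and makes the combinatorial factor explicit in one place (where the paper's first case writes a bare $p^{4i}$ without the corresponding binomial-coefficient count of which plays land inside $W$, so your version is in fact tidier on that point). The tail computation is essentially identical in spirit to the paper's split at $\sqrt n$, and the conclusion follows.
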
 

\begin{proof}

According to  Theorem \ref{main2}, the game's duration does not surpass $\frac{16 n}{a}$ and Breaker's degree of $v$ was at most $(1-\delta)n$, while Maker's degree of $v$ was at most $15$. \\

Assume for a moment  that Maker's graph is not a $k_0$-expander. So there is a subset $A$  of $V$ of size $|A| = i \leq k_0\,$ in Maker's graph  such that $N_M(A)$ is contained in a set $B$ of size $|B| = 2i-1$. We already stated that according to Theorem \ref{main2} the smallest degree in Maker's graph is $16$. We show that $i\geq 6$: Assume for a moment that $|A|\leq 5$. since the vertex-degree in $A$ in the Maker graph is greater than 16, every vertex in $A$ is adjacent to at least 16-4=12  vertices not belonging to A, thus $|B|\geq 12$. On the other hand $|B|=2i-1\leq 10-1=9$, which is a contradiction. 

Note that according to the strategy above Maker focus only on vertices with maximum danger. Since the Maker-degree of every vertex in $A$ is at least 16, by double counting we conclude that there are at least $8i$ Maker edges incident to $A$. We distinguish between two cases. \emph{First} at least $4i$ of these edges have one endpoint of maximum danger at the appropriate time in $A$ and the second endpoint  in $A \cup B$, \emph{second} at least $4i$ edges have one endpoint in $B$ and the second endpoint in $A$.  
Further we know that, according to Theorem \ref{main2}, Breaker's vertex-degree was at most $(1-\delta)n$, while Maker's vertex-degree was at most $15$. As a result, for every vertex in $V$ there are at least $\delta n-16$ unclaimed incident edges at that time. Regardless of the game's history, the probability that Maker chooses an edge at a vertex $v\in A$ with second endpoint belonging to $A\cup B$ --at that time-- is at most $\frac{|A\cup B|-1}{\delta n- 16}\leq \frac{3i-2}{\delta n- 16}$.  Hence the probability choosing at least $4i$ edges that are incidents to a vertex in $A$ with the other endpoint in $A\cup B$ is at most $\left(\frac{3i-2}{\delta n- 16}\right)^{4i}$. Now we consider choosing at least $4i$ edges that are incidents to $B$ with the other endpoint in $A$. Observe that there are $\left(\begin{array}{c}16|B| \\ 4 i\end{array}\right)$ possible choices for this situation. Regardless of the game's history, for $u\in B$,  the probability that its another endpoint is in $A$ is at most $\frac{|A|}{\delta n- 16}$. So, the probability that Maker chooses at least $4i$ edges that are incidents to $B$ with the another endpoint in $A$ is at most $\left(\begin{array}{c}16|B| \\ 4 i\end{array}\right)\left(\frac{i}{\delta n-15}\right)^{4 i}$.  Overall, given fixed $A$ and $B$ of sizes $|A|=i,|B|=2 i-1$ in Maker's graph, there exist at least $8 i$ edges incidentes to $A$, with probability at most
$$
\left(\frac{3 i-2}{\delta n-16}\right)^{4 i}+\left(\begin{array}{c}
32 i \\
4 i
\end{array}\right)\left(\frac{i}{\delta n-16}\right)^{4 i}<\left(\frac{10 e i}{\delta n}\right)^{4 i}
\qquad \qquad \left(\text{ since } \left(\begin{array}{l}n \\ k\end{array}\right) \leq\left(\frac{e n}{k}\right)^{k}\right) $$
So Maker fails to create a $k_0$-expander with probability at most
\begin{align*}
\sum_{i=6}^{k_{0}}\left(\begin{array}{l}
n \\
i
\end{array}\right)\left(\begin{array}{c}
n-i \\
2 i-1
\end{array}\right) \left(\left(\frac{3 i-2}{\delta n-16}\right)^{4 i}+\left(\begin{array}{c}
32 i \\
4 i
\end{array}\right)\left(\frac{i}{\delta n-16}\right)^{4 i}\right)&\leq \sum_{i=6}^{k_{0}}\left[\frac{e n}{i}\left(\frac{e n}{2 i}\right)^{2}\left(\frac{10 e i}{\delta n}\right)^{4}\right]^{i}\\&  \leq 
\sum_{i=6}^{k_{0}}\left[\frac{e^{7} 10^{4}}{2^{2}} \cdot \frac{1}{\delta^{4}} \cdot \frac{i}{n}\right]^{i}.
\end{align*}

 For $6 \leq i \leq \sqrt{n}$ we have $\left[\frac{e^{7} 10^{4}}{2^{2}} \cdot \frac{1}{\delta^{4}} \cdot \frac{i}{n}\right]^{i} \leq\left(O(1) n^{-1 / 2}\right)^6=O(n^{-3})$, while for $\sqrt{n} \leq i \leq k_0$ we have $\left[\frac{e^{7} 10^{4}}{2^{2}} \cdot \frac{1}{\delta^{4}} \cdot \frac{i}{n}\right]^{i} \leq\left(\frac{e^7 10^4}{2^2} \delta\right)^{\sqrt{n}}$. Let $\beta:= \frac{e^7 10^4}{2^2} \delta$, then $\beta <1$ since $\delta < \frac{2^2}{e^7 10^4}$. Hence,
$$\sum_{i=6}^{k_{0}}\left[\frac{e^{7} 10^{4}}{2^{2}} \cdot \frac{1}{\delta^{4}} \cdot \frac{i}{n}\right]^{i}\leq \sqrt{n}\,O(n^{-3})+ (k_{0}-\sqrt{n})\beta^{\sqrt{n}}=o\left(1\right).$$
As a result, Maker succeeds in building a $k_0$-expander in the first $\frac{16 n}{a}$ movements almost surely, and Lemma \ref{comp} is proved

\end{proof}
\begin{proof}{Theorem \ref{HB} (Maker's win)}\\
\textbf{Stage II – Creating a Connected Expander.}\\
According to Lemma \ref{comp}, every connected component in Maker's graph has a minimum size of $3 k_0$. With at least $9 k_{0}^{2}=\Theta\left(n^{2}\right)$ edges we can create a connected graph between any two such components. Maker's goal is to get an edge between each of the two components. The number of these components is at most $\frac{n}{3k_0}$. Hence, by the fact that the Maker plays $a$ moves in each round, he  requires at most $\frac{n}{3ak_0}$ moves to accomplish his objective. Therefore, Breaker has at most $\left(\frac{16 n}{a}+\frac{n}{3ak_0}\right) b<17  \frac{n^{2}}{\ln n}$ edges claimed if $b=(1-\varepsilon)\frac{an}{a+\ln(n)}$   (respectively $\left(\frac{16 n}{a}+\frac{n}{3ak_0}\right) b= O\left(n^{3/2} \sqrt{\ln(n)} \right)$  edges if $b= (1-\varepsilon)n$ ) on the board altogether. As a result, Breaker is powerless to stop Maker from attaining his purpose. \\
\textbf{Stage III – Completing a Hamiltonian Cycle.}\\
Maker builds a Hamiltonian cycle by gradually adding boosters. According to Lemma \ref{boost}, there are at least  $\frac{k_0^2}{2}$ non-edges that are boosters. Maker will add a maximum of $n$ boosters in at most $\frac{n}{a}$ moves. The duration of the three stages is at most $\frac{16 n}{a}+\frac{n}{3 a k_0}+\frac{n}{a} $ rounds during which Breaker selects at most
 $$\left(\frac{16 n}{a}+\frac{n}{3 a k_0}+\frac{n}{a}\right).b < \frac{18 n^2}{\ln n} \,\,\,\,\,\,\  \left( \text{ respectivlely} \left(\frac{16 n}{a}+\frac{n}{3 a k_0}+\frac{n}{a}\right).b= O\left(n^{3/2} \sqrt{\ln(n)} \right) \right)$$ 
  edges. Thus numbers, in both cases, are less than $k_0^2/2=\frac{1}{2} \delta^{10} n^2$ boosters of Maker. Thus Maker wins the game.
  
\end{proof}

\section{Conclusion}
We studied the $(a:b)$ Maker-Breaker minimum-degree-$k$ game and proved a lower and upper bound  leading to a direct generalization of the result presented by  Gebauer and Szab{\'o} \cite{gebauer2009asymptotic}.   Moreover, we answered  the open question stated by  Hefetz, Mikalacki, and Stojakovi{\'c}  \cite{HefetzMS12}  for the connectivity game. In addition, we generalized the bounds for the  (1:b) Hamiltonicity game to the $(a:b)$ Hamiltonicity game.
We hope that our result for Maker's winning strategy for the minimum-degree-$k$ game are helpful to analyze  $(a:b)$ Maker-Breaker games, for example the $(a:b)$ Connectivity and the $(a:b)$ Hamiltonicity game in oriented graphs.

\end{document}